\theoremstyle{plain}
\newtheorem{lem}{Lemma}[section]
\newtheorem{prop}[lem]{Proposition}
\newtheorem{thm}[lem]{Theorem}
\theoremstyle{definition}
\newtheorem{rem}[lem]{Remark}
\newtheorem{dfn}[lem]{Definition}
\newcommand{\RHomV}{\textbf{R}\mathrm{Hom}^{\cc V}}
\newcommand{\HomV}{\mathrm{Hom}^{\cc V}}
\newcommand{\cc}{\mathcal}
\newcommand{\cofreg}{\cof_{\mathrm{reg}}}
\newcommand{\cof}{-\mathrm{cof}}
\newcommand{\twothree}{\aleph}
\newcommand{\twothreemap}[1]{h}
\newcommand{\twothreecone}[1]{C}
\newcommand{\colim}[1]{\underset{#1}{\mathsf{colim}}}
\newcommand{\LSM}{L^{\cc V}_S\cc M}
\begin{document}
\footskip30pt

\baselineskip=1.1\baselineskip

\title{A Criterion for the Monoid Axiom in Enriched Bousfield Localizations}
\email{peterjbonart@gmail.com}

\author{Peter Bonart}

\begin{abstract}
	This paper proves a criterion for verifying the monoid axiom in enriched left Bousfield localizations.
\end{abstract}

\keywords{Model Categories, Bousfield Localization, Monoid Axiom}
\subjclass[2010]{18N55, 18D20, 18M99}

\maketitle
\thispagestyle{empty}
\pagestyle{plain}

\tableofcontents

\section{Introduction}
This paper proves a criterion for verifying the monoid axiom in enriched left Bousfield localizations.

The monoid axiom is introduced in \cite{schwede2000algebras}. Enriched Bousfield localizations are introduced in \cite{barwick2010bousfield}.
The main theorem of this paper is as follows.

\begin{thm}\label{thmIntroduction}
	Let $\cc V$ be a monoidal model category with a set of generating cofibrations $I_{\cc V}$. Let $\cc M$ be a monoidal model $\cc V$-category with a set of weakly generating trivial cofibrations $J^\prime$ (e.g. a set of generating trivial cofibrations).
	Let $S$ be a class of cofibrations with cofibrant domain in $\cc M$, such that the $\cc V$-Bousfield localization $L^{\cc V}_S\cc M$ exists.
	Assume that $J^\prime \cup (S \square I_{\cc V}) $ permits the small object argument.
	If all morphisms in
	$$((J^\prime \cup (S \square I_{\cc V}) )\otimes \cc M)\cofreg $$ are weak equivalences in $\LSM$, then $\LSM$ is a monoidal model $\cc V$-category that satisfies the monoid axiom.
\end{thm}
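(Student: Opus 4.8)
\emph{Proof idea.}
The plan rests on the fact that $\LSM$, which is a model category by hypothesis, has the same underlying category, cofibrations and cofibrant objects as $\cc M$, and that every $\cc M$-weak equivalence is a weak equivalence of $\LSM$; consequently the unit axioms and the ``cofibration halves'' of both pushout--product axioms (for the tensoring $\cc V\otimes\cc M\to\cc M$ and for the internal $\cc M\otimes\cc M\to\cc M$) pass from $\cc M$ to $\LSM$ verbatim, so only the behaviour of trivial cofibrations of $\LSM$ requires work. The structural input I would use is that, by the construction of the enriched left Bousfield localization in \cite{barwick2010bousfield}, $\cc J_0:=J^\prime\cup(S\square I_{\cc V})$ is a set of weakly generating trivial cofibrations for $\LSM$; this is where the hypothesis that $\cc J_0$ permits the small object argument enters, and it yields in particular that every trivial cofibration of $\LSM$ is a retract, relative to its domain, of a $\cc J_0$-cell complex.

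For the trivial-cofibration half of the \emph{$\cc V$-enriched} pushout--product axiom, take a cofibration $i$ of $\cc V$ and a trivial cofibration $f$ of $\LSM$; using that the pushout--product is cocontinuous in each variable and that trivial cofibrations of $\LSM$ are closed under transfinite composition and retracts, one reduces to $f=j_0\in\cc J_0$. If $j_0\in J^\prime$ then $i\square j_0$ is a trivial cofibration already in $\cc M$, since $\cc M$ is a $\cc V$-model category. If $j_0=s\square i^\prime$ with $s\in S$ and $i^\prime\in I_{\cc V}$, then the coherent associativity of pushout--products gives $i\square j_0\cong s\square c$ with $c:=i\square i^\prime$ the pushout--product of $i$ and $i^\prime$ in $\cc V$, a cofibration of $\cc V$; hence $i\square j_0$ is a retract of an $(S\square I_{\cc V})$-cell complex and therefore a trivial cofibration of $\LSM$. (If $i$ is itself trivial in $\cc V$ the assertion is immediate.)

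The core of the argument is the monoid axiom: every map $\phi\in(\cc J\otimes\cc M)\cofreg$ — where $\cc J$ is the class of all trivial cofibrations of $\LSM$ — is a weak equivalence of $\LSM$. Such a $\phi$ is a transfinite composite of cobase changes of maps $j_\alpha\otimes X_\alpha$ with $j_\alpha\in\cc J$; writing each $j_\alpha$ as a retract, relative to its domain, of a $\cc J_0$-cell complex $\hat\jmath_\alpha$, I would construct by transfinite induction a parallel tower whose successive maps are the cobase changes of the corresponding $\hat\jmath_\alpha\otimes X_\alpha$ and whose transfinite composite $\hat\phi$ has $\phi$ as a retract relative to the domain of $\phi$. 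Being assembled from cobase changes of $\cc J_0$-cell complexes tensored with objects of $\cc M$, the map $\hat\phi$ lies in $(\cc J_0\otimes\cc M)\cofreg$, so it is a weak equivalence of $\LSM$ by hypothesis, and hence so is its retract $\phi$. Granting the monoid axiom, the trivial-cofibration half of the \emph{internal} pushout--product axiom follows too: reducing as above to $j=j_0\in\cc J_0$ and $g$ a cofibration of $\cc M$, the only nonformal case is $j_0=s\square i$ with $s\in S$, where $j_0\square g\cong s\square c$ for the cofibration $c\colon Z\to Y$ of $\cc M$ with $c:=i\square g$; writing $s\colon A\to B$ (both $A$ and $B$ cofibrant, as $s$ has cofibrant domain), the structure map $A\otimes Y\to P$ of the pushout $P=(A\otimes Y)\sqcup_{A\otimes Z}(B\otimes Z)$ is a cobase change of $s\otimes Z$, and its composite with $s\square c\colon P\to B\otimes Y$ equals $s\otimes Y$; as $s\otimes Z$ and $s\otimes Y$, hence also that cobase change, are weak equivalences of $\LSM$ by the monoid axiom, two-out-of-three shows $s\square c$ is one. (Here $\cc M$ is symmetric monoidal, so the remaining variable is handled symmetrically.)

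The step I expect to be the main obstacle is exactly this reduction of the monoid axiom from the proper class $\cc J$ of all trivial cofibrations to the set $\cc J_0$. Because $(-)\cofreg$ is not closed under retracts, one cannot pass from $\cc J$ to $\cc J_0$ by merely substituting $\hat\jmath_\alpha$ for $j_\alpha$ stage by stage, but must carry out the ``retract of towers'' construction coherently along the entire transfinite composite defining $\phi$; the point of the hypothesis that $\cc J_0$ — rather than the proper class $\cc J_0\otimes\cc M$ — permits the small object argument is precisely that it makes each individual $j_\alpha$ a retract, relative to its domain, of a $\cc J_0$-cell complex, after which the two towers can be assembled. Keeping the weak-generation hypothesis on $J^\prime$ in play throughout — so that this resolution of trivial cofibrations is available in $\LSM$ and not merely in $\cc M$ — is the other point that needs care.
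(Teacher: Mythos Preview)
There is a genuine gap in your reduction of the monoid axiom from the class $\cc J$ of all trivial cofibrations of $\LSM$ to the set $\cc J_0=J^\prime\cup(S\square I_{\cc V})$. You assert that weak generation together with the small object argument ``yields in particular that every trivial cofibration of $\LSM$ is a retract, relative to its domain, of a $\cc J_0$-cell complex.'' This is false in general, and it is precisely the distinction between \emph{generating} and \emph{weakly generating} trivial cofibrations. Running the small object argument on a trivial cofibration $f:A\to B$ factors it as $f=qi$ with $i$ a $\cc J_0$-cell and $q$ having the right lifting property against $\cc J_0$; the retract argument then requires $q$ to be a fibration, but weak generation only guarantees this when the codomain $B$ is fibrant (this is exactly the content of Lemma~\ref{LemmaJcof}). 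If your claim held for arbitrary $B$, then $\cc J_0$ would in fact be a set of generating trivial cofibrations, contrary to the remark following Lemma~\ref{LemmaJcof}. Consequently your ``retract of towers'' construction cannot get started: the individual $\hat\jmath_\alpha$ you need simply do not exist for trivial cofibrations $j_\alpha$ with non-fibrant codomain.

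The paper's replacement for this step is the $\twothree$-construction. Given a trivial cofibration $f:A\to B$, one passes to a fibrant replacement $h:B\to B^f$; now $h$ and $hf$ are trivial cofibrations with fibrant codomain, hence lie in $\cc J_0\cof$ by Lemma~\ref{LemmaJcof}, so $f\otimes X\in\twothree((\cc J_0\otimes\cc M)\cof)$ for every $X$. The substantive technical work is then Lemma~\ref{lemAleph}, which shows $\twothree(I)\cofreg\subseteq\twothree(I\cofreg)$ --- a closure statement whose transfinite-composition case requires a staircase-of-pushouts construction of roughly the same shape as your proposed tower, but built from the witnesses $h$ rather than from nonexistent retractions. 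Once the monoid axiom is in hand, your deduction of the internal pushout--product axiom via two-out-of-three is the same as the paper's, though note that no reduction to $\cc J_0$ is needed at that stage: the argument with $h$, $B\otimes g$ and $f\square g$ works directly for an arbitrary trivial cofibration $g$.
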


The rest of the introduction will recall the notions that appear in the theorem and motivate the theorem.

\begin{dfn}
	Let $\cc C$ be a cocomplete category.
	\begin{enumerate}
		\item Let $I$ be a class of maps in $\cc C$. We define $I\cofreg$ to be the class of all transfinite compositions of base changes of morphisms from $I$.
		\item Let $I$ be a class of maps in $\cc C$. We say that $I$ \textit{permits the small object argument} if domains of morphisms from $I$ are small relative to $I\cofreg$.
		\item Let $I, J$ be two classes of maps in $\cc C$. Then $I \square J$ denotes the set of all pushout-products from maps from $I$ and maps from $J$.
	\end{enumerate}
\end{dfn}

\begin{dfn}
	A model category $\cc M$ with class of trivial cofibrations $TC$ and a monoidal structure $\otimes$ is said to \textit{satisfy the monoid axiom} if all morphisms from
	$$(TC \otimes \cc M)\cofreg $$
	are weak equivalences in $\cc M$.
\end{dfn}

If $\cc M$ has a set of generating trivial cofibrations $J$, the following well-known lemma can help verify the monoid axiom in $\cc M$.

\begin{lem}
	Let $\cc M$ be a model category and monoidal category. Suppose $M$ is cofibrantly generated with a set of generating cofibrations $J$.
	If all morphisms
	$$(J \otimes \cc M)\cofreg$$
	are weak equivalences in $M$, then $M$ satisfies the monoid axiom.
\end{lem}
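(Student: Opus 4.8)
The idea is to upgrade the hypothesis, which constrains the \emph{set} $J$ (necessarily a set of generating \emph{trivial} cofibrations, as the conclusion shows), to a statement about the class $TC$ of all trivial cofibrations. Write $\cc W$ for the weak equivalences of $\cc M$, set $\cc K := (J \otimes \cc M)\cofreg$, and let $\overline{\cc K}$ be the class of all retracts of maps in $\cc K$. By hypothesis $\cc K \subseteq \cc W$, and $\cc W$ is closed under retracts, so $\overline{\cc K} \subseteq \cc W$. It therefore suffices to prove (i) $TC \otimes \cc M \subseteq \overline{\cc K}$, and (ii) $\overline{\cc K}$ is closed under base change and transfinite composition: granting these, monotonicity of $(-)\cofreg$ gives $(TC \otimes \cc M)\cofreg \subseteq \overline{\cc K}\cofreg \subseteq \overline{\cc K} \subseteq \cc W$, which is the monoid axiom.

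For (i): cofibrant generation lets the small object argument factor any $f \in TC$ as a relative $J$-cell complex followed by a $J$-injective map, which is a fibration; since $f$ is a trivial cofibration it lifts against that fibration, so the retract argument exhibits $f$ as a retract of the $J$-cell complex, hence of some $g \in J\cofreg$. For any $M \in \cc M$, functoriality of $-\otimes M$ makes $f \otimes M$ a retract of $g \otimes M$; and since $-\otimes M$ preserves colimits (as $\cc M$ is closed monoidal), it sends a transfinite composition of base changes of $J$-maps to a transfinite composition of base changes of $(J \otimes \cc M)$-maps, so $g \otimes M \in \cc K$ and thus $f \otimes M \in \overline{\cc K}$.

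For (ii): first, $\cc K$ itself is closed under base change (pushout pasting) and transfinite composition (concatenation of $\lambda$-sequences). Closure of $\overline{\cc K}$ under base change reduces to an elementary diagram chase: if $f$ is a retract of $g$ with domain-retraction $r$, then the base change of $f$ along $u$ is a retract of the base change of $g$ along $u \circ r$, and crucially this retract is the identity on the shared source object. For closure under transfinite composition, suppose $h \colon Y_0 \to Y_\lambda$ is the transfinite composite of maps $h_\gamma \colon Y_\gamma \to Y_{\gamma+1}$, each a retract of a map of $\cc K$. I would construct in parallel a $\lambda$-sequence $Z_0 = Y_0 \to Z_1 \to \cdots$ together with natural transformations $s_\bullet \colon Y_\bullet \Rightarrow Z_\bullet$ and $q_\bullet \colon Z_\bullet \Rightarrow Y_\bullet$ with $q_\bullet s_\bullet = \mathrm{id}$: at a successor stage, writing $h_\gamma$ as a base change of some $f_\gamma \otimes M_\gamma$ that is (by (i)) a retract of a $\cc K$-map $\bar g_\gamma$, one defines $Z_{\gamma+1}$ to be the base change of $\bar g_\gamma$ along $s_\gamma$ composed with the relevant attaching and retraction maps, so that $Z_\gamma \to Z_{\gamma+1} \in \cc K$, and the ``identity on the source'' feature above lets $s_\gamma, q_\gamma$ extend to $s_{\gamma+1}, q_{\gamma+1}$; at a limit stage one passes to colimits, where naturality keeps $s_\bullet, q_\bullet$ coherent. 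Then $Z_0 \to Z_\lambda$ is a transfinite composite of base changes of $\cc K$-maps, hence (expanding each $\cc K$-map) of base changes of $(J \otimes \cc M)$-maps, so $Z_0 \to Z_\lambda \in \cc K$; and $h$ is a retract of $Z_0 \to Z_\lambda$ via $s_\lambda, q_\lambda$, so $h \in \overline{\cc K}$.

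The only genuinely delicate step is the bookkeeping in (ii): checking that the comparison maps built cell-by-cell really do organise into natural transformations of $\lambda$-sequences, so that they survive the colimits at limit ordinals, while keeping the auxiliary tower inside $\cc K$. If $J \otimes \cc M$ permitted the small object argument, (ii) would be automatic, since $\overline{\cc K}$ would coincide with the saturated class of $(J \otimes \cc M)$-cofibrations; but the domains $A \otimes M$ occurring in $J \otimes \cc M$ need not be small, so this shortcut is unavailable and one must argue directly. Throughout, one also uses freely the standard facts that weak equivalences are closed under retracts and that cofibrantly generated model categories admit the factorizations produced by the small object argument.
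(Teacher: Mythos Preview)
The paper does not actually prove this lemma: it is stated in the introduction as a ``well-known'' motivational result, with no proof supplied. So there is nothing to compare against directly.

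Your argument is correct. Step (i) is the standard reduction via the retract argument plus the fact that $-\otimes M$, being a left adjoint, carries $J\cofreg$ into $(J\otimes\cc M)\cofreg$. Step (ii) is the genuinely nontrivial part, and you are right to flag it: without smallness of the domains in $J\otimes\cc M$ one cannot simply invoke $(J\otimes\cc M)\cof$, so one must show by hand that $\overline{\cc K}$ is closed under pushout and transfinite composition. Your pushout argument (replace $g$ by its pushout along the domain-retraction $r$, so that the resulting retract is the identity on the source) is exactly what is needed, and it feeds correctly into the transfinite-composition step: once each $h_\gamma$ is a retract of some $\tilde g_\gamma\in\cc K$ with identity on the domain $Y_\gamma$, pushing $\tilde g_\gamma$ out along $s_\gamma:Y_\gamma\to Z_\gamma$ gives the next stage $Z_{\gamma+1}$, and the section/retraction extend as you describe. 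The one garbled sentence (``writing $h_\gamma$ as a base change of some $f_\gamma\otimes M_\gamma$\ldots'') conflates the general closure statement with the specific application and should be deleted; the construction works for an arbitrary retract of a $\cc K$-map, which is all you need.

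It is worth noting that your step (ii) is a simpler cousin of the paper's Lemma~\ref{lemAleph}: there the analogous closure is proved for the class $\twothree(I\cofreg)$ rather than for retracts, precisely because in the weakly-generating setting one cannot arrange a retract onto a $J'\cofreg$-map but only a $2$-out-of-$3$ witness. Your argument is the ``easy'' prototype of that construction.
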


This lemma is only really useful if one has an easy explicit description of $J$.
In Bousfield localizations this lemma is usually not very useful, because it is usually very difficult to explicitly describe a set of generating trivial cofibrations $J$ for a Bousfield localization. Usually the only description of $J$ is that $J$ consists of all trivial cofibrations between $\kappa$-small objects for some cardinal $\kappa$ whose existence is proven by the Bousfield-Smith cardinality argument. With this description, verifying the monoid axiom for $J$ is hardly any easier than just directly verifying the monoid axiom for all trivial cofibrations.

We introduce the following notion of \textit{weakly generating trivial cofibrations}, which turn out to be much more helpful for verifying the monoid axiom in Bousfield localizations.
\begin{dfn}
	Let $\cc M$ be a model category and let $J^\prime$ be a set of trivial cofibrations in $\cc M$. We say that $J^\prime$ is a set of \textit{weakly generating trivial cofibrations} for $\cc M$, if for every morphism $f : A \rightarrow B$ in $\cc M$, if $f$ has the right lifting property with respect to $J^\prime$ and $B$ is fibrant, then $f$ is a fibration.
\end{dfn}
The definition is inspired by the weakly finitely generated model categories from \cite[Definition 3.4]{dundas2003enriched}.

Weakly generating trivial cofibrations do not characterize all fibrations through lifting properties, but only those fibrations that have fibrant codomain. 

If we have a model category $\cc M$ with a set of generating trivial cofibrations $J$, then
for every Bousfield localization of $\cc M$ we can explicitly describe a set of weakly generating trivial cofibrations.

\begin{prop}
	Let $\cc V$ be a monoidal model category with a set of generating cofibrations $I_{\cc V}$.
	Let $\cc M$ be a monoidal model $\cc V$-category with a set of weakly generating trivial cofibrations $J^\prime$ (e.g. a set of generating trivial cofibrations).
	Let $S$ be a class of cofibrations with cofibrant domain in $\cc M$, such that the $\cc V$-Bousfield localization $L^{\cc V}_S\cc M$ exists.
	Then the set
	$$J^\prime \cup (I_{\cc V} \square S) $$
	is a set of weakly generating trivial cofibrations for $L^{\cc V}_S\cc M$.
\end{prop}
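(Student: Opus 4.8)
The plan is to verify the two requirements in the definition of weakly generating trivial cofibrations directly for $\LSM$: that $J^\prime\cup(I_{\cc V}\square S)$ consists of trivial cofibrations in $\LSM$, and that every $f\colon A\to B$ in $\LSM$ with fibrant codomain and with the right lifting property against $J^\prime\cup(I_{\cc V}\square S)$ is a fibration in $\LSM$.

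For the first requirement, the morphisms of $J^\prime$ are trivial cofibrations in $\cc M$, hence trivial cofibrations in $\LSM$, since $\LSM$ has the same cofibrations as $\cc M$ and at least as many weak equivalences. For a pushout-product $i\square s$ with $i\in I_{\cc V}$ and $s\in S$: the map $s$ is a cofibration of $\cc M$ that becomes an $S$-local equivalence (for every $S$-local $W$ the map $\RHomV(s,W)$ is a weak equivalence by definition of $S$-locality), hence a trivial cofibration in $\LSM$; since $\LSM$ is again a $\cc V$-model category (part of the content of the existence of the enriched localization, see \cite{barwick2010bousfield}), the enriched pushout-product axiom applied to the cofibration $i$ of $\cc V$ and the trivial cofibration $s$ of $\LSM$ shows that $i\square s$ is a trivial cofibration in $\LSM$.

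For the second requirement, suppose $f\colon A\to B$ has the right lifting property against $J^\prime\cup(I_{\cc V}\square S)$ with $B$ fibrant in $\LSM$; thus $B$ is $S$-local, and in particular fibrant in $\cc M$. Since $f$ has the right lifting property against $J^\prime$ and $J^\prime$ is weakly generating for $\cc M$, $f$ is a fibration in $\cc M$, so $A$ is fibrant in $\cc M$ too. It now suffices to show that $A$ is $S$-local, since a fibration of $\cc M$ between $S$-local objects is a fibration of $\LSM$. To this end, apply the two-variable adjunction relating the $\cc V$-tensoring of $\cc M$ and its $\cc V$-enriched $\mathrm{Hom}$: $f$ has the right lifting property against $I_{\cc V}\square S$ if and only if for every $s\colon X\to Y$ in $S$ the pullback-hom
$$\widehat{\HomV}(s,f)\colon \HomV(Y,A)\longrightarrow \HomV(X,A)\times_{\HomV(X,B)}\HomV(Y,B)$$
has the right lifting property against $I_{\cc V}$, i.e.\ is a trivial fibration in $\cc V$. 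Fix $s\colon X\to Y$ in $S$; here $X$ is cofibrant by hypothesis on $S$ and $Y$ is cofibrant since $X\to Y$ is a cofibration out of a cofibrant object. As $B$ is fibrant, $\HomV(s,B)\colon\HomV(Y,B)\to\HomV(X,B)$ is a fibration of $\cc V$ that computes $\RHomV(s,B)$, which is a weak equivalence since $B$ is $S$-local; hence $\HomV(s,B)$ is a trivial fibration, and therefore so is its pullback $\HomV(X,A)\times_{\HomV(X,B)}\HomV(Y,B)\to\HomV(X,A)$. Composing this projection with $\widehat{\HomV}(s,f)$ recovers $\HomV(s,A)\colon\HomV(Y,A)\to\HomV(X,A)$, which is thus a trivial fibration, in particular a weak equivalence; and since $A$ is fibrant and $X,Y$ are cofibrant it computes $\RHomV(s,A)$. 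Hence $\RHomV(s,A)$ is a weak equivalence for every $s\in S$, i.e.\ $A$ is $S$-local, which completes the argument.

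The step I expect to require the most care is the middle of the last paragraph: identifying the right lifting property against $I_{\cc V}\square S$ with the pullback-hom condition via the enriched two-variable adjunction, and tracking cofibrancy and fibrancy so that $\HomV$ genuinely computes $\RHomV$ at each stage. The remaining inputs — that $\LSM$ is again a $\cc V$-model category, and that a fibration of $\cc M$ between $S$-local objects is a fibration of $\LSM$ — are standard facts about (enriched) left Bousfield localizations.
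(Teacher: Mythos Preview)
Your proof is correct and follows essentially the same strategy as the paper: verify that the set consists of trivial cofibrations in $\LSM$, then show that any $f\colon A\to B$ with the right lifting property and $S$-local codomain is an $\cc M$-fibration (via $J'$), that $A$ is $S$-local, and finally invoke the fact that an $\cc M$-fibration between $S$-local objects is an $\LSM$-fibration.

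The one place where your presentation diverges from the paper is the argument that $A$ is $S$-local. The paper packages the equivalence ``$X$ is $S$-local $\Leftrightarrow$ $X\to 1$ has the RLP against $I_{\cc V}\square S$'' into a separate lemma, and then simply observes that the RLP class of $I_{\cc V}\square S$ is closed under composition: since both $f$ and $B\to 1$ have this RLP, so does $A\to 1$. You instead unwind the two-variable adjunction explicitly at the pullback-hom level and compose trivial fibrations in $\cc V$. Both arguments are correct and encode the same adjunction; the paper's version is marginally cleaner because the composition-of-RLP step replaces your explicit factorization of $\HomV(s,A)$ through the pullback.
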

The proof of this proposition is in Section \ref{sectionWgtc}.

To verify the monoid axiom it is enough to verify it on a set of weakly generating trivial cofibrations.

\begin{thm}
	Let $\cc M$ be a model category that is also a monoidal category. Let $J^\prime$ be a set of weakly generating trivial cofibrations for $\cc M$ that permits the small object argument. Then the monoid axiom can be checked on $J^\prime$ in the sense, that if
	$$(J^\prime \otimes \cc M)\cofreg $$ consists of weak equivalences, then $\cc M$ satisfies the monoid axiom.
\end{thm}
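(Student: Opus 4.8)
The plan is to reduce the monoid axiom to the behaviour of $J'$ on trivial cofibrations with \emph{fibrant} codomain — the only fibrations the weak‑generation hypothesis controls — and then to propagate this to arbitrary transfinite compositions by splicing fibrant replacements into the tower. Concretely I would proceed in three steps: a ``fibrant‑codomain'' step, a formal gluing lemma about retract‑closures, and an interleaving construction.

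\emph{The fibrant‑codomain step, and a gluing lemma.} Let $f\colon A\to B$ be a trivial cofibration with $B$ fibrant. Since $J'$ permits the small object argument, factor $f$ as $A\xrightarrow{\ \iota\ }C\xrightarrow{\ p\ }B$ with $\iota\in(J')\cofreg$ (a transfinite composition of pushouts of coproducts of maps of $J'$, hence in particular itself a trivial cofibration) and $p$ having the right lifting property with respect to $J'$. Two‑out‑of‑three makes $p$ a weak equivalence, and since $B$ is fibrant the weak‑generation hypothesis makes $p$ a fibration, hence a trivial fibration; as $f$ is a cofibration, the retract argument exhibits $f$ as a retract of $\iota$. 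Applying $-\otimes M$ (which preserves colimits) shows that $f\otimes M$ is a retract of $\iota\otimes M\in(J'\otimes\cc M)\cofreg$, so $f\otimes M$ lies in the closure $\overline{(J'\otimes\cc M)\cofreg}$ of $(J'\otimes\cc M)\cofreg$ under retracts, which by hypothesis consists of weak equivalences. I would then record a gluing lemma: for any class $\cc C$ closed under pushouts and transfinite compositions, a transfinite composition of pushouts of retracts of maps in $\cc C$ is again a retract of a map in $\cc C$; the proof builds, alongside the given tower, a parallel tower obtained by pushing out the chosen $\cc C$‑maps, starting from the same object so the successive retraction data cohere, and checks by transfinite induction that the original tower is a retract of it. Applied to $\cc C=(J'\otimes\cc M)\cofreg$, this shows that $\overline{(J'\otimes\cc M)\cofreg}$ is closed under transfinite composition of pushouts and still consists of weak equivalences.

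\emph{The interleaving step.} Let $g\colon X_0\to X_\lambda$ lie in $(TC\otimes\cc M)\cofreg$, presented as a transfinite composition of pushouts $X_\alpha\to X_{\alpha+1}$ of maps $f_\alpha\otimes M_\alpha$ with $f_\alpha\colon A_\alpha\to B_\alpha$ a trivial cofibration, and choose fibrant replacements $i_\alpha\colon B_\alpha\to\hat B_\alpha$. Then $i_\alpha$ and $i_\alpha f_\alpha$ are trivial cofibrations with fibrant codomain, so $i_\alpha\otimes M_\alpha$ and $(i_\alpha f_\alpha)\otimes M_\alpha$ lie in $\overline{(J'\otimes\cc M)\cofreg}$ by the first step. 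Build a second tower $X_0=\tilde X_0\to\tilde X_1\to\cdots$ with $\tilde X_{\alpha+1}$ the pushout of $(i_\alpha f_\alpha)\otimes M_\alpha$ along $A_\alpha\otimes M_\alpha\to X_\alpha\to\tilde X_\alpha$, together with comparison maps $X_\alpha\to\tilde X_\alpha$ defined inductively from the pushout universal property (so $X_0\to\tilde X_0$ is the identity). Using $(i_\alpha f_\alpha)\otimes M_\alpha=(i_\alpha\otimes M_\alpha)\circ(f_\alpha\otimes M_\alpha)$, a transfinite induction identifies $X_0\to\tilde X_\lambda$ with a transfinite composition of pushouts of the maps $(i_\alpha f_\alpha)\otimes M_\alpha$, and identifies the comparison map $X_\lambda\to\tilde X_\lambda$ with a transfinite composition of pushouts of the maps $i_\alpha\otimes M_\alpha$ (informally: replace each $B_\alpha$‑piece of $X_\lambda$ by the corresponding $\hat B_\alpha$‑piece, read diagonally across the two towers). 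By the gluing lemma both maps are weak equivalences; since $X_0\to X_\lambda\to\tilde X_\lambda$ equals $X_0\to\tilde X_\lambda$, two‑out‑of‑three forces $g$ to be a weak equivalence. Hence $\cc M$ satisfies the monoid axiom.

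\emph{The main obstacle.} The essential difficulty is the gap between ``weakly generating'' and ``generating'': weakly generating trivial cofibrations detect only fibrations with fibrant codomain, so one cannot — unlike in the classical cofibrantly generated case — assert that an arbitrary trivial cofibration $f_\alpha$ is a retract of a map in $(J')\cofreg$, nor that $f_\alpha\otimes M_\alpha\in\overline{(J'\otimes\cc M)\cofreg}$. The interleaving construction is precisely what circumvents this, by trading $f_\alpha$ for the fibrant‑codomain trivial cofibration $i_\alpha f_\alpha$ while separately bookkeeping the weak equivalence $i_\alpha$; the one place where genuine care is required is the transfinite induction showing that the two auxiliary towers are assembled, respectively, from pushouts of the maps $(i_\alpha f_\alpha)\otimes M_\alpha$ and from pushouts of the maps $i_\alpha\otimes M_\alpha$, the limit stages being the delicate ones.
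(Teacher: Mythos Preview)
Your proposal is correct and follows essentially the same strategy as the paper: reduce to trivial cofibrations with fibrant codomain via fibrant replacement (the paper's Lemma~\ref{LemmaJcof}), then propagate through transfinite compositions by an interleaving/staircase construction and conclude by two-out-of-three. The paper packages your interleaving step abstractly via the operator $\aleph(I)=\{f\mid \exists\,h\in I,\ hf\in I\}$ and the lemma $\aleph(I)\cofreg\subseteq\aleph(I\cofreg)$, whose proof is exactly the staircase you sketch; your ``gluing lemma'' is absorbed there into the standard fact that $I\cof$ is closed under $(-)\cofreg$.
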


The proof of this theorem is in Section \ref{sectionMonoidAxiom}.

By combining the above two results we obtain Theorem \ref{thmIntroduction}.
The proof of Theorem \ref{thmIntroduction} is in Section \ref{sectionMonoidInBousfield}.

In a previous paper \cite[Section 3]{bonart2023triangulated} we have applied arguments similar to the ones from this paper to prove the monoid axiom in a specific situation. This paper generalizes the arguments that were used to prove the monoid axiom in that paper, by removing all finiteness conditions and enriching in an arbitrary cofibrantly generated monoidal model category $\cc V$.

\section{Fibrations in Enriched Bousfield localizations}

From now on let $\cc V$ be a monoidal model category. Let $I_{\cc V}$ be a set of generating cofibrations for $\cc V$. Let $\cc M$ be a model $\cc V$-category. Let $S$ be a class of cofibrations between cofibrant objects in $\cc M$, such that the enriched Bousfield localization $\LSM$ exists.

We will need to adapt a few lemmas from the theory of normal Bousfield localizations from \cite{hirschhorn2003model} to enriched Bousfield localizations.

The following definition is a slightly corrected version of \cite[Definition 7.6.3]{hirschhorn2003model}.
The definition needs to be slightly corrected, because the category defined in \cite[Definition 7.6.3]{hirschhorn2003model} is often not connected, and then \cite[Theorem 7.6.4 (3)]{hirschhorn2003model} is false.
\begin{dfn}
	For a morphism $f : A\rightarrow B$ in $\cc M$, we define a category $(A \downarrow \cc M \downarrow B)_f$ as follows:
	Its objects are diagrams $A \overset{g}{\rightarrow} X \overset{h}{\rightarrow} B$ in $\cc M$ satisfying $hg = f$.
	A morphism from $A \rightarrow X \rightarrow B$ to $A \rightarrow Y \rightarrow B$ is a morphism $X \rightarrow Y$ in $\cc M$ such that the following diagram commutes:
	$$\xymatrix{ A \ar[r] \ar[d] & X \ar[dl] \ar[d] \\
	             Y \ar[r] & B }$$
             
    Note that $(A \downarrow \cc M \downarrow B)_f$ forms a model category in which a morphism $X \rightarrow Y$ is a weak equivalence, fibration or cofibration if it is one in $\cc M$.
\end{dfn}

The following Proposition is a straightforward adaption of \cite[Proposition 3.3.15 (1)]{hirschhorn2003model} to enriched Bousfield localizations.
\begin{lem}\label{lemhirsch3315}
	Suppose we have a commutative triangle in $\cc M$
	$$\xymatrix{ X \ar[dr]_p \ar[rr]^f& & Y \ar[dl]^q  \\
	             & Z}$$
    such that $p$ is a fibration in $\cc M$ and $f$ is a weak equivalence in $\cc M$ and $q$ is a fibration in $\LSM$.
    Then $p$ is a fibration in $\LSM$.
\end{lem}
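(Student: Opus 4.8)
The plan is to reduce the statement to the fact that fibrations in a model category are closed under retracts, by exhibiting $p$ as a retract (in the arrow category of $\cc M$) of a map that is manifestly an $\LSM$-fibration. The only input from Bousfield localization theory needed is the elementary observation that $\cc M$ and $\LSM$ have the same cofibrations, hence the same trivial fibrations.

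First I would factor $f$ inside $\cc M$ as $X \xrightarrow{j} W \xrightarrow{r} Y$ with $j$ a cofibration and $r$ a trivial fibration; since $f$ is a weak equivalence in $\cc M$, two-out-of-three forces $j$ to be a trivial cofibration in $\cc M$. Because the trivial fibrations of $\cc M$ and of $\LSM$ coincide (both are exactly the maps with the right lifting property against all cofibrations, and the cofibrations agree), $r$ is in particular a fibration in $\LSM$. Hence $\bar q := q \circ r : W \to Z$, being a composite of two $\LSM$-fibrations, is an $\LSM$-fibration, and $\bar q \circ j = qrj = qf = p$.

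Next, since $j$ is a trivial cofibration in $\cc M$ and $p$ is a fibration in $\cc M$, the lifting problem given by the commutative square with top edge $\mathrm{id}_X$, left edge $j$, bottom edge $\bar q$, and right edge $p$ has a solution $s : W \to X$ satisfying $sj = \mathrm{id}_X$ and $ps = \bar q$. The data $(j,\mathrm{id}_Z) : p \to \bar q$ and $(s,\mathrm{id}_Z) : \bar q \to p$ then exhibit $p$ as a retract of $\bar q$ in the arrow category of $\cc M$ (the composite is $(\mathrm{id}_X,\mathrm{id}_Z)$). Since $\bar q$ is an $\LSM$-fibration and fibrations in any model category are stable under retracts, $p$ is an $\LSM$-fibration.

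There is no serious obstacle here; the content is bookkeeping. The two points to get right are: the comparison of trivial fibrations, which is precisely where it matters that $\LSM$ keeps the cofibrations of $\cc M$ fixed; and the role of the hypothesis that $f$ is a weak equivalence \emph{in $\cc M$} (not merely an $S$-local equivalence), since this is exactly what lets the factorization $f = rj$ be carried out inside $\cc M$, so that $j$ is an honest $\cc M$-trivial cofibration and the lift against the $\cc M$-fibration $p$ exists. If one wishes to follow \cite{hirschhorn2003model} more literally, the same factorization can be obtained by applying the factorization axiom of the model category $(X \downarrow \cc M \downarrow Z)_p$ to the morphism $f$ (which lies in that category because $qf = p$), which automatically records the compatibilities $\bar q j = p$ and $qr = \bar q$; this amounts to the same argument.
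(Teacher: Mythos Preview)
Your argument is correct and is genuinely different from the paper's. The paper verifies directly that $p$ has the right lifting property against every $\LSM$-trivial cofibration $g:A\to B$: it first lifts $B\to Y$ against $q$, then works in the model category $(A\downarrow\cc M\downarrow Z)_{bg}$, where $f:X\to Y$ is a weak equivalence between fibrant objects and $B$ is cofibrant, and invokes \cite[Corollary~7.7.5]{hirschhorn2003model} to transport the map $B\to Y$ to a map $B\to X$. Your route instead factors $f=rj$ in $\cc M$, uses that $\cc M$ and $\LSM$ share trivial fibrations to see that $\bar q=qr$ is an $\LSM$-fibration, and then lifts $\mathrm{id}_X$ against $p$ along the $\cc M$-trivial cofibration $j$ to exhibit $p$ as a retract of $\bar q$. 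Your approach is more elementary and self-contained: it needs only the axioms of a model category and the single localization fact that cofibrations are unchanged, and it avoids the under--over category and the external reference. The paper's approach, on the other hand, mirrors Hirschhorn's original argument for \cite[Proposition~3.3.15]{hirschhorn2003model} and so makes the adaptation to the enriched setting transparently parallel to the classical one.
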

\begin{proof}
	We show that $p$ has the right lifting property with respect to trivial cofibrations in $\LSM$.
	Let $g : A \rightarrow B$ be a trivial cofibration in $\LSM$, and consider a diagram
	$$\xymatrix{A \ar[d]_g \ar[r]^a & X \ar[d]^p \\
	            B \ar[r]_b  & Z  } $$
    Since $q$ is a fibration in $\LSM$ we can get a lift in the diagram
    $$\xymatrix{A \ar[d]_g \ar[r]^{fa} & Y \ar[d]^q \\
    	B \ar[r]_b \ar@{..>}[ur]  & Z  } $$
    
    So we have a map $B \rightarrow Y$ in the category $(A \downarrow \cc M \downarrow Z)_{bg}$.
    Since $B$ is cofibrant in $(A \downarrow \cc M \downarrow Z)_{bg}$, and the map $f : X \rightarrow Y$ is a weak equivalence between fibrant objects in $(A \downarrow \cc M \downarrow Z)_{bg}$, it follows by \cite[Corollary 7.7.5]{hirschhorn2003model} that there also exists a map $B \rightarrow X$ in $(A \downarrow \cc M \downarrow Z)_{bg}$. This map then solves the lifting problem
    $$\xymatrix{A \ar[d]_g \ar[r]^a & X \ar[d]^p \\
    	B \ar[r]_b \ar@{..>}[ur]  & Z  } $$
    and then $p$ is a fibration in $\LSM$.
\end{proof}

The following Proposition is a straightforward adaption of \cite[Proposition 3.3.16 (1)]{hirschhorn2003model} to enriched Bousfield localizations.
\begin{prop}\label{lemhirsch3316}
	Let $p : X \rightarrow Y$ be a morphisms in $\cc M$, such that $p$ is a fibration in $\cc M$ and $X$ and $Y$ are fibrant in $L^{\cc V}_S\cc M$.
	Then $p$ is a fibration in $L^{\cc V}_S\cc M$.
\end{prop}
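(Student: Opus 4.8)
The plan is to reduce the statement to Lemma~\ref{lemhirsch3315} by factoring $p$ \emph{inside} the localized model category. Using the factorizations of $\LSM$, write $p$ as a composite
$$\xymatrix{X \ar[r]^f & P \ar[r]^q & Y}$$
with $f$ a trivial cofibration in $\LSM$ and $q$ a fibration in $\LSM$, so that $qf = p$. Since $q$ is a fibration in $\LSM$ and $Y$ is fibrant in $\LSM$, the composite of $q$ with the fibration $Y \to \text{(terminal object)}$ is again a fibration in $\LSM$, so $P$ is fibrant in $\LSM$. Hence $f \colon X \to P$ is a weak equivalence in $\LSM$ between two objects that are fibrant in $\LSM$.

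First I would promote $f$ to a weak equivalence in $\cc M$. This uses the standard fact --- valid for enriched Bousfield localizations just as in the non-enriched case, cf.\ \cite[Theorem~3.2.13]{hirschhorn2003model} --- that a morphism between $\LSM$-fibrant objects is a weak equivalence in $\LSM$ if and only if it is a weak equivalence in $\cc M$. Granting this, $f$ is a weak equivalence in $\cc M$, and we obtain a commutative triangle
$$\xymatrix{ X \ar[dr]_p \ar[rr]^f & & P \ar[dl]^q  \\ & Y}$$
in which $p$ is a fibration in $\cc M$ (by hypothesis), $f$ is a weak equivalence in $\cc M$, and $q$ is a fibration in $\LSM$. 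Applying Lemma~\ref{lemhirsch3315} with $Z = Y$ then gives that $p$ is a fibration in $\LSM$, which is exactly the claim.

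The one point that is not purely formal is the fact just invoked: that $\LSM$-weak equivalences between $\LSM$-fibrant objects are already weak equivalences in $\cc M$ (a local-Whitehead-type statement). I expect this to be the main thing to nail down in the enriched setting, and it is presumably where Barwick's construction of $\LSM$ from \cite{barwick2010bousfield} enters: one needs that the fibrant objects of $\LSM$ are precisely the ($\cc V$-)$S$-local objects that are fibrant in $\cc M$, and that the Whitehead argument goes through with the $\cc V$-enriched hom-objects in place of the simplicial ones. Everything else is the same bookkeeping with the categories $(A \downarrow \cc M \downarrow Z)_f$ that already underlies Lemma~\ref{lemhirsch3315}.
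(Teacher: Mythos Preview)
Your proof is correct and follows exactly the same route as the paper's: factor $p$ in $\LSM$ as a trivial cofibration followed by a fibration, observe that the middle object is $\LSM$-fibrant, deduce that the first factor is a weak equivalence in $\cc M$, and then invoke Lemma~\ref{lemhirsch3315}.

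The only difference is in how you justify that the trivial cofibration $f$ is a weak equivalence in $\cc M$. You appeal to a local-Whitehead statement and worry about whether it carries over to the enriched setting, suggesting one must unwind Barwick's description of the $\LSM$-fibrant objects. The paper's argument here is simpler and avoids this entirely: since $\LSM$ has the same cofibrations as $\cc M$ and more weak equivalences, the identity functor $\LSM \to \cc M$ is right Quillen, and Ken Brown's lemma then says it preserves weak equivalences between fibrant objects. This works for any left Bousfield localization (enriched or not) without any analysis of what the $\LSM$-fibrant objects actually are, so your ``main thing to nail down'' is in fact immediate.
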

\begin{proof}
	Factor $p$ as $X \overset{f}{\rightarrow} W \overset{q}{\rightarrow} Y$, where $f$ is a trivial cofibration in $L^{\cc V}_S\cc M$ and $q$ is a fibration in in $\LSM$.
	Since $q$ is a fibration and $Y$ is fibrant in $\LSM$, it follows that $W$ is fibrant in $\LSM$.
	Then $f$ is a weak equivalence between fibrant objects in $\LSM$.
	Since the identity functor $Id_{\cc M} : \cc M \rightarrow \cc M$ is a right Quillen functor from $\LSM$ to $\cc M$, it preserves weak equivalences between fibrant objects. Since $f$ is a weak equivalence between fibrant objects in $\LSM$ it follows that $f$ is also a weak equivalence in $\cc M$.
	The result now follows by applying Lemma \ref{lemhirsch3315} to the following triangle.
	$$\xymatrix{ X \ar[dr]_p \ar[rr]^f& & W \ar[dl]^q  \\
		& Y}$$
\end{proof}

\begin{lem}\label{lemlocalfibrantifrlp}
	Let $\cc M$ be a model $\cc V$-category.
	Let $S$ be a class of cofibrations with cofibrant domain in $\cc M$, such that the $\cc V$-Bousfield localization $L^{\cc V}_S\cc M$ exists.
	Then an object $X \in \cc M$ is fibrant in  $L^{\cc V}_S\cc M$ if and only if $X$ is fibrant in $\cc M$ and $X \rightarrow 1$ has the right lifting property with respect to $I_{\cc V} \square S$.
\end{lem}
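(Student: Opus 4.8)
The plan is to unwind the definition of a fibrant object in $\LSM$ and to translate the enriched $S$-locality condition into the stated lifting property by means of the two-variable $\cc V$-adjunction. Recall from the construction of enriched left Bousfield localizations that $X$ is fibrant in $\LSM$ if and only if $X$ is fibrant in $\cc M$ and $X$ is $S$-local, i.e. for every $s\colon A\to B$ in $S$ the map $\RHomV(s,X)\colon\RHomV(B,X)\to\RHomV(A,X)$ is a weak equivalence in $\cc V$. Since every member of $S$ is a cofibration with cofibrant domain, both $A$ and $B$ are cofibrant; if in addition $X$ is fibrant in $\cc M$, the derived enriched hom-objects agree with the underived ones and $\RHomV(s,X)$ is weakly equivalent to $\HomV(s,X)\colon\HomV(B,X)\to\HomV(A,X)$ (using that $\HomV(-,X)$ takes weak equivalences between cofibrant objects to weak equivalences when $X$ is fibrant). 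So, for $X$ fibrant in $\cc M$, being $S$-local is equivalent to $\HomV(s,X)$ being a weak equivalence in $\cc V$ for every $s\in S$.

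Next I would observe that $\HomV(s,X)$ is automatically a fibration. Indeed $X\to 1$ is a fibration ($X$ being fibrant and $1$ terminal), so the pushout-product axiom for the $\cc V$-model category $\cc M$ shows that the pullback-hom $\HomV(B,X)\to\HomV(A,X)\times_{\HomV(A,1)}\HomV(B,1)$ is a fibration in $\cc V$. The tensor-hom adjunction identifies $\cc V(V,\HomV(Z,1))$ with a one-element set for every $V$ and $Z$, so $\HomV(Z,1)$ is terminal in $\cc V$ and the target of the pullback-hom is simply $\HomV(A,X)$; thus $\HomV(s,X)$ is a fibration. A fibration in $\cc V$ is a weak equivalence exactly when it is a trivial fibration, i.e. exactly when it has the right lifting property with respect to $I_{\cc V}$. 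Hence, for $X$ fibrant in $\cc M$, $X$ is $S$-local if and only if for every $s\in S$ the map $\HomV(B,X)\to\HomV(A,X)$ has the right lifting property with respect to $I_{\cc V}$.

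Finally I would invoke the two-variable adjunction. For $i\colon C\to D$ in $I_{\cc V}$ and $s\colon A\to B$ in $S$, a lifting problem of $i$ against the pullback-hom $\HomV(B,X)\to\HomV(A,X)\times_{\HomV(A,1)}\HomV(B,1)$ transposes to a lifting problem of the pushout-product $i\square s$ against $X\to 1$. Therefore $X\to 1$ has the right lifting property with respect to $I_{\cc V}\square S$ if and only if for every $s\in S$ the map $\HomV(B,X)\to\HomV(A,X)$ has the right lifting property with respect to $I_{\cc V}$, which by the previous step is equivalent (given $X$ fibrant in $\cc M$) to $X$ being $S$-local, hence to $X$ being fibrant in $\LSM$.

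I expect the main obstacle to lie in the bookkeeping of the last two steps: verifying that the pullback-hom $\HomV(B,X)\to\HomV(A,X)\times_{\HomV(A,1)}\HomV(B,1)$ collapses to $\HomV(B,X)\to\HomV(A,X)$ because the enriched hom into a terminal object is terminal, and then carefully matching transposed lifting problems so that ``right lifting property against $I_{\cc V}\square S$'' becomes exactly ``each $\HomV(B,X)\to\HomV(A,X)$ is a trivial fibration''. The remaining ingredients --- the characterization of fibrant objects in $\LSM$, the replacement of derived by underived hom-objects, and the pushout-product axiom --- are standard.
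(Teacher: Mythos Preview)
Your proposal is correct and follows essentially the same route as the paper: characterize fibrant objects in $\LSM$ as $S$-local fibrant objects, replace derived by underived enriched hom via cofibrancy of the source and fibrancy of $X$, observe that $\HomV(s,X)$ is a fibration, and then transpose the lifting condition against $I_{\cc V}$ through the two-variable adjunction. The only cosmetic difference is that you spell out why the pullback-hom collapses to $\HomV(B,X)\to\HomV(A,X)$ (via $\HomV(Z,1)$ being terminal), whereas the paper simply asserts that $s^*$ is a fibration.
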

\begin{proof}
	Let $X$ be an object that is fibrant in $\cc M$.
	Then $X$ is fibrant in $L^{\cc V}_S\cc M$ if and only if $X$ is $\cc V$-enriched $S$-local. This means that for every $s : A \rightarrow B$ with $s \in S$ the map
	$$s^* :  \RHomV(B,X) \rightarrow \RHomV(A,X) $$
	is a weak equivalence in $\cc V$.
	Since $X$ is fibrant in $\cc M$ and $s$ is a cofibration between cofibrant objects, this is the case if and only if the map
	$$s^* : \HomV(B,X) \rightarrow \HomV(A,X)$$
	between the non-derived hom objects is a weak equivalence in $\cc V$.
	Since $s$ is a cofibration and $X$ is fibrant in $\cc M$, the map $s^*$ is a fibration in $\cc V$.
	So $s^*$ is a weak equivalence if and only if $s^*$ is a trivial fibration. This is the case if and only if $s^*$ has the right lifting property with respect to $I_{\cc V}$.
	Now for any $f : V \rightarrow W$ with $f \in I_{\cc V}$, a diagram
	$$\xymatrix{  V \ar[r] \ar[d]_f & \HomV(B,X) \ar[d]^{s^*} \\
	             W  \ar[r] \ar@{..>}[ur] & \HomV(A,X)  }$$
	has a lift if and only if the diagram
	$$\xymatrix{  W \otimes A \underset{V \otimes A}{\coprod} V \otimes B \ar[r] \ar[d]_{f \square s } & X \ar[d] \\
		W \otimes B \ar[r] \ar@{..>}[ur] & 1  }$$
	has a lift.
	Therefore $X$ is $S$-local if and only if $X \rightarrow 1$ has the right lifting property with respect to $I_{\cc V} \square S$.
\end{proof}

\section{Weakly Generating Trivial Cofibrations}\label{sectionWgtc}

The following definition is inspired by the weakly finitely generated model categories from
\cite[Definition 3.4]{dundas2003enriched}.
However unlike them we do not impose any finiteness conditions.
\begin{dfn}
	Let $\cc M$ be a model category and let $J^\prime$ be a set of trivial cofibrations in $\cc M$. We say that $J^\prime$ is a set of \textit{weakly generating trivial cofibrations} for $\cc M$, if for every morphism $f : A \rightarrow B$ in $\cc M$, if $f$ has the right lifting property with respect to $J^\prime$ and $B$ is fibrant, then $f$ is a fibration.
\end{dfn}

\begin{lem}\label{LemmaJcof}
	Let $\cc M$ be a model category, and $J^\prime$ a set of weakly generating trivial cofibrations that permits the small object argument.
	Let $f : A \rightarrow B$ be a trivial cofibration, and assume that $B$ is fibrant.
	Then $f \in J^\prime\cof$.
\end{lem}
\begin{proof}
	According to the small object argument \cite[Theorem 2.1.14]{hovey2007model} we can factor $f$ as $f = qi$ with $i \in J^\prime\mathrm{-cof}_{\mathrm{reg}}$ and $q$ having the right lifting property with respect to $J^\prime$.
	$$\xymatrix{A \ar[dr]_i \ar[rr]^f & & B \\
		& Z \ar[ur]_{q} &}  $$
	
	Since $q$ has a fibrant codomain and $q$ has the right lifting property with respect to $J^\prime$, it follows that $q$ is a fibration.
	Then $f$ has the left lifting property against $q$ so from the retract argument \cite[Lemma 1.1.9]{hovey2007model} it follows that $f$ is a retract of $i$. Since $i \in J^\prime\mathrm{-cof}_{\mathrm{reg}}$ this implies $f \in J^\prime\mathrm{-cof}$.
\end{proof}

\begin{rem}
	If $\cc M$ is a model category and $J$ is a set of generating trivial cofibrations for $\cc M$, then $J$ is also a set of weakly generating trivial cofibrations for $\cc M$. However the converse does not hold, and not every set of weakly generating trivial cofibrations is a set of generating trivial cofibrations.
\end{rem}

It is generally difficult to explicitly describe a set of generating trivial cofibrations in a Bousfield localization. However we can always explicitly describe a set of weakly generating trivial cofibrations in a Bousfield localization.
\begin{prop}\label{lemmaBousfield}
	Let $\cc M$ be a monoidal model $\cc V$-category with a set of weakly generating trivial cofibrations $J^\prime$.
	Let $S$ be a class of cofibrations with cofibrant domain in $\cc M$, such that the $\cc V$-Bousfield localization $L^{\cc V}_S\cc M$ exists.
	Then the set
	$$J^\prime \cup (I_{\cc V} \square S) $$
	is a set of weakly generating trivial cofibrations for $L^{\cc V}_S\cc M$.
\end{prop}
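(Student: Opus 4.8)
The plan is to verify directly the two requirements in the definition of a set of weakly generating trivial cofibrations for $L^{\cc V}_S\cc M$: \emph{(i)} every morphism in $J^\prime \cup (I_{\cc V}\square S)$ is a trivial cofibration in $L^{\cc V}_S\cc M$, and \emph{(ii)} if $f : A \to B$ has the right lifting property with respect to $J^\prime \cup (I_{\cc V}\square S)$ and $B$ is fibrant in $L^{\cc V}_S\cc M$, then $f$ is a fibration in $L^{\cc V}_S\cc M$.

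For \emph{(i)}: the maps in $J^\prime$ are trivial cofibrations in $\cc M$, and since the left Bousfield localization $L^{\cc V}_S\cc M$ has the same cofibrations as $\cc M$ and every weak equivalence of $\cc M$ is a weak equivalence of $L^{\cc V}_S\cc M$, they remain trivial cofibrations in $L^{\cc V}_S\cc M$. For a map $f\square s$ with $f\in I_{\cc V}$ and $s\in S$, I would use that $s$ is a cofibration in $\cc M$ which becomes a weak equivalence in $L^{\cc V}_S\cc M$ by construction of the enriched localization, hence a trivial cofibration there; since $L^{\cc V}_S\cc M$ is again a monoidal model $\cc V$-category and $f$ is a cofibration in $\cc V$, the pushout-product axiom forces $f\square s$ to be a trivial cofibration in $L^{\cc V}_S\cc M$.

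For \emph{(ii)}, suppose $f : A \to B$ has the right lifting property with respect to $J^\prime \cup (I_{\cc V}\square S)$ with $B$ fibrant in $L^{\cc V}_S\cc M$. Then $B$ is in particular fibrant in $\cc M$ (by Lemma \ref{lemlocalfibrantifrlp}), and $f$ has the right lifting property with respect to $J^\prime$, so the weakly-generating property of $J^\prime$ in $\cc M$ makes $f$ a fibration in $\cc M$; consequently $A$, being the source of a fibration in $\cc M$ over an object fibrant in $\cc M$, is fibrant in $\cc M$. Next I would show $A$ is fibrant in $L^{\cc V}_S\cc M$ via Lemma \ref{lemlocalfibrantifrlp}: the map $B\to 1$ has the right lifting property with respect to $I_{\cc V}\square S$ because $B$ is fibrant in $L^{\cc V}_S\cc M$, and $f$ has the right lifting property with respect to $I_{\cc V}\square S$ by hypothesis, hence so does the composite $A\to B\to 1$ (the class of maps with the right lifting property against a fixed class is closed under composition). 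Thus $A$ is fibrant in $L^{\cc V}_S\cc M$, and $f$ is a fibration in $\cc M$ between two objects that are fibrant in $L^{\cc V}_S\cc M$, so Proposition \ref{lemhirsch3316} gives that $f$ is a fibration in $L^{\cc V}_S\cc M$, finishing the proof.

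The only genuinely non-formal ingredient is step \emph{(i)}: it relies on the fact that the enriched Bousfield localization $L^{\cc V}_S\cc M$ is again a monoidal model $\cc V$-category (so that its pushout-product axiom is at our disposal) and that the maps of $S$ are weak equivalences in it --- both of which are part of what it means for $L^{\cc V}_S\cc M$ to exist. Everything in \emph{(ii)} is a formal manipulation of lifting properties combined with Lemma \ref{lemlocalfibrantifrlp} and Proposition \ref{lemhirsch3316}.
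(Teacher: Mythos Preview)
Your proof is correct and follows the paper's argument essentially line by line: part \emph{(ii)} is identical to the paper's proof (fibrancy of $B$ in $\cc M$, then $f$ a fibration in $\cc M$ via $J'$, then fibrancy of $A$ in $L^{\cc V}_S\cc M$ via Lemma~\ref{lemlocalfibrantifrlp} and closure of RLP under composition, then Proposition~\ref{lemhirsch3316}).

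One terminological caution in part \emph{(i)}: you say the argument needs $L^{\cc V}_S\cc M$ to be a \emph{monoidal} model $\cc V$-category. In fact the pushout-product $f\square s$ for $f\in I_{\cc V}$ and $s\in S$ uses only the $\cc V$-tensoring $\cc V\times\cc M\to\cc M$, so what you actually need is that $L^{\cc V}_S\cc M$ is a \emph{$\cc V$-model category}; this is indeed part of the existence of the enriched Bousfield localization. The distinction matters because the paper later uses this very proposition to \emph{prove} that $L^{\cc V}_S\cc M$ is a monoidal model category (Theorem~\ref{ThmGeneral}), so invoking the internal monoidal pushout-product axiom here would be circular. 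With that clarification, your argument and the paper's coincide.
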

\begin{proof}
	All morphisms from $J^\prime$ and $S$ are trivial cofibrations in $L^{\cc V}_S\cc M$, and all morphisms from $I_{\cc V}$ are cofibrations in $\cc V$, so the set $J^\prime \cup (I_{\cc V} \square S)$ consists out of trivial cofibrations in $\cc M$.
	
	Let $f : A \rightarrow B$ be a morphism in $\cc M$ such that $f$ has the right lifting property with respect to $J^\prime \cup (I_{\cc V} \square S)$ and $B$ is fibrant in $L^{\cc V}_S\cc M$.
	Then $B$ is also fibrant in $\cc M$, and $f$ has the right lifting property with respect to $J^\prime$. Therefore $f$ is a fibration in $\cc M$, and then $A$ is fibrant in $\cc M$.
	
	By Lemma \ref{lemlocalfibrantifrlp} the map $B \rightarrow 1$ has the right lifting property with respect to $I_{\cc V} \square S$. Since $A \overset{f}{\rightarrow} B$ and $B \rightarrow 1$ have the right lifting property with respect to $I_{\cc V} \square S$, it follows that $A \rightarrow 1$ has the right lifting property with respect to $I_{\cc V} \square S$. Lemma  \ref{lemlocalfibrantifrlp} now implies that $A$ is fibrant in $L^{\cc V}_S\cc M$,
	and then Lemma \ref{lemhirsch3316} implies that $f$ is fibration in $L^{\cc V}_S\cc M$.
	Thus the set
	$J^\prime \cup (I_{\cc V} \square S) $
	is a set of weakly generating trivial cofibrations for $L^{\cc V}_S\cc M$.
\end{proof}

\section{Monoid Axiom}\label{sectionMonoidAxiom}

\begin{thm}\label{ThmCheckOnJPrime}
	Let $\cc M$ be a model category that is also a monoidal category ($\cc M$ need not be a monoidal model category). Let $J^\prime$ be a set of weakly generating trivial cofibrations for $\cc M$ that permits the small object argument. Then the monoid axiom can be checked on $J^\prime$ in the sense, that if
	$$(J^\prime \otimes \cc M)\cofreg $$ consists of weak equivalences, then $$(TC \otimes \cc M)\cofreg $$ consists of weak equivalences, where $TC$ is the set of all trivial cofibrations of $\cc M$.
\end{thm}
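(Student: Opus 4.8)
The plan is to reduce the assertion to a statement about a single transfinite composite of base changes of maps $j\otimes M$ with $j$ a trivial cofibration and $M\in\cc M$, and then to compare such a composite with a version in which the codomains of the $j$'s have been replaced by fibrant objects, so that Lemma~\ref{LemmaJcof} brings $J^\prime$ into play. I will use freely that the functor $-\otimes M$ preserves colimits for every $M$ (hence commutes with base changes and transfinite compositions), that trivial cofibrations are closed under base change and transfinite composition, that $J^\prime\cof=\mathrm{Retr}(J^\prime\cofreg)$ because $J^\prime$ permits the small object argument, and Lemma~\ref{LemmaJcof}, which says every trivial cofibration with fibrant codomain lies in $J^\prime\cof$. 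Write $\cc R:=\mathrm{Retr}((J^\prime\otimes\cc M)\cofreg)$. Since weak equivalences are closed under retracts and $(J^\prime\otimes\cc M)\cofreg$ consists of weak equivalences by hypothesis, $\cc R$ consists of weak equivalences; and since $-\otimes M$ preserves colimits, $(J^\prime\cof)\otimes\cc M\subseteq\cc R$.

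The technical heart is a formal lemma: $\cc R$ is closed under base change and transfinite composition. Closure under base change is a direct diagram chase — a base change of a retract of a map $\ell$ is a retract of the corresponding base change of $\ell$ — and uses only that $(J^\prime\otimes\cc M)\cofreg$ is closed under base change. Closure under transfinite composition is subtler, precisely because classes of the form $\mathcal A\cofreg$ are not closed under retracts: given a $\lambda$-sequence $Z_\bullet$ with each $Z_\beta\to Z_{\beta+1}$ a retract of some $\ell_\beta\colon P_\beta\to Q_\beta$ in $(J^\prime\otimes\cc M)\cofreg$, one builds in parallel a $\lambda$-sequence $\widetilde Z_\bullet$ with $\widetilde Z_0=Z_0$, together with natural sections $s_\bullet\colon Z_\bullet\to\widetilde Z_\bullet$ and retractions $r_\bullet\colon\widetilde Z_\bullet\to Z_\bullet$, by setting $\widetilde Z_{\beta+1}:=\widetilde Z_\beta\sqcup_{P_\beta}Q_\beta$ (with $P_\beta\to\widetilde Z_\beta$ the composite of the local retraction $P_\beta\to Z_\beta$ with $s_\beta$), defining $s_{\beta+1},r_{\beta+1}$ from the retract data, and taking colimits at limit stages. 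Then each $\widetilde Z_\beta\to\widetilde Z_{\beta+1}$ is a base change of $\ell_\beta$, hence in $(J^\prime\otimes\cc M)\cofreg$, so $\widetilde Z_0\to\widetilde Z_\lambda\in(J^\prime\otimes\cc M)\cofreg$, and passing to colimits exhibits $Z_0\to Z_\lambda$ as a retract of $\widetilde Z_0\to\widetilde Z_\lambda$; thus $Z_0\to Z_\lambda\in\cc R$.

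Now for the main construction. Let $f\colon X_0\to X_\lambda$ be a transfinite composite of base changes of maps $j_\alpha\otimes M_\alpha$, with $j_\alpha\colon A_\alpha\to B_\alpha$ a trivial cofibration; we must show $f$ is a weak equivalence. By transfinite induction we build a $\lambda$-sequence $Z_\bullet$ with $Z_0=X_0$ and a natural transformation $f_\bullet\colon X_\bullet\to Z_\bullet$ with $f_0=\mathrm{id}$, such that each $Z_\alpha\to Z_{\alpha+1}$ and each $f_\alpha$ lies in $\cc R$. At a successor stage, choose a fibrant replacement of $B_\alpha$ that is a trivial cofibration, say $k_\alpha\colon B_\alpha\to\widehat B_\alpha$ with $\widehat B_\alpha$ fibrant; then $k_\alpha$ and $k_\alpha j_\alpha$ lie in $J^\prime\cof$ by Lemma~\ref{LemmaJcof}, whence $k_\alpha\otimes M_\alpha$ and $(k_\alpha j_\alpha)\otimes M_\alpha$ lie in $\cc R$. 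Define $Z_{\alpha+1}$ as the base change of $(k_\alpha j_\alpha)\otimes M_\alpha$ along $f_\alpha$ composed with the attaching map of $X_{\alpha+1}$; then $Z_\alpha\to Z_{\alpha+1}$ is a base change of a map in $\cc R$, and the factorization $(k_\alpha j_\alpha)\otimes M_\alpha=(k_\alpha\otimes M_\alpha)\circ(j_\alpha\otimes M_\alpha)$ induces a map $f_{\alpha+1}\colon X_{\alpha+1}\to Z_{\alpha+1}$. Pasting pushout squares shows $f_{\alpha+1}$ factors as $X_{\alpha+1}\to W_\alpha\to Z_{\alpha+1}$ with $X_{\alpha+1}\to W_\alpha$ a base change of $f_\alpha$ and $W_\alpha\to Z_{\alpha+1}$ a base change of $k_\alpha\otimes M_\alpha$, so $f_{\alpha+1}\in\cc R$. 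At a limit stage $\alpha$ set $Z_\alpha:=\mathrm{colim}_{\beta<\alpha}Z_\beta$ and $f_\alpha:=\mathrm{colim}_{\beta<\alpha}f_\beta$; using that colimits commute, $f_\alpha$ is identified with the transfinite composite of the $\alpha$-sequence $\beta\mapsto X_\alpha\sqcup_{X_\beta}Z_\beta$, whose successor maps are again base changes of the $k_\beta\otimes M_\beta$, so $f_\alpha\in\cc R$ by the lemma above.

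Finally, $Z_0\to Z_\lambda$ is a transfinite composite of maps in $\cc R$, hence in $\cc R$, hence a weak equivalence, and $Z_0=X_0$; and $f_\lambda\in\cc R$ is a weak equivalence. By naturality of $f_\bullet$ and $f_0=\mathrm{id}$, the composite $f_\lambda\circ f$ equals $Z_0\to Z_\lambda$, so two-out-of-three gives that $f$ is a weak equivalence; this shows that $(TC\otimes\cc M)\cofreg$ consists of weak equivalences. I expect the real obstacle to be the closure of $\cc R$ under transfinite composition (and, correspondingly, the limit stage of the main construction): since $\cofreg$-classes are not retract-closed, one cannot simply chain the local retraction data and must instead carry along the auxiliary cell complex $\widetilde Z_\bullet$ and propagate the sections and retractions coherently through every successor and limit stage.
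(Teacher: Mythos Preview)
Your argument is correct and reaches the same conclusion by a genuinely different route than the paper. The paper introduces an auxiliary operator $\twothree(I)=\{f\mid \exists\,h\text{ with }h,\,hf\in I\}$ and proves a purely combinatorial lemma, $\twothree(I)\cofreg\subseteq\twothree(I\cofreg)$, via a doubly-indexed ``staircase'' of pushouts; it then observes that fibrant replacement of codomains places $TC\otimes\cc M$ inside $\twothree((J^\prime\otimes\cc M)\cof)$ and finishes by $2$-out-of-$3$. You instead isolate the class $\cc R=\mathrm{Retr}((J^\prime\otimes\cc M)\cofreg)$, prove directly that $\cc R$ is closed under pushout and transfinite composition (the cell-replacement construction $\widetilde Z_\bullet$), and then run a single transfinite induction that builds the comparison sequence $Z_\bullet$ and the maps $f_\alpha\in\cc R$ in parallel, with the limit stage handled by the identification $f_\alpha=\colim{}\,(X_\alpha\sqcup_{X_\beta}Z_\beta)$. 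Conceptually both arguments exploit the same idea---push codomains to fibrant objects so that Lemma~\ref{LemmaJcof} applies, then propagate through the cell structure---but the packaging differs: the paper's $\twothree$-lemma is a stand-alone statement about any cocomplete category and any class $I$, reusable without reference to model structures, whereas your closure lemma for $\cc R$ is a standard (and arguably more familiar) fact about retracts of relative cell complexes, making your main induction shorter and more linear. Either approach implicitly uses that $-\otimes M$ preserves colimits, which the paper also invokes via the closed monoidal structure.
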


The proof of this theorem relies on a technical lemma.

\begin{dfn}
	Let $\cc C$ be a category, $I$ a class of morphisms in $\cc C$.
	We write $\twothree(I)$ for the class of all morphisms $f : A \rightarrow B$ in $\cc C$ such that there exists a morphism $\twothreemap{f} : B \rightarrow \twothreecone{f}$, such that $\twothreemap{f} \in I$ and $\twothreemap{f} \circ f \in I$.
	$$\twothree(I) := \{f \in \mathrm{Mor}(\cc C) \mid \exists \twothreemap{f} \in \mathrm{Mor}(\cc C),  \twothreemap{f} \in I \wedge \twothreemap{f} \circ f \in I \}$$
\end{dfn}

\begin{lem}\label{lemAleph}
	Let $\cc C$ be a cocomplete category. Let $I$ be a class of maps in $\cc C$. Then
	$$\twothree(I)\cofreg \subseteq \twothree(I\cofreg).$$
\end{lem}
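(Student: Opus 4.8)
The plan is to unwind the definition of $\twothree(I)\cofreg$ and then glue, in two complementary ways, a cell structure assembled from the defining data of the $\twothree(I)$-morphisms onto the entire transfinite tower.

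First I would spell out a morphism $f\in\twothree(I)\cofreg$: it is the transfinite composition $f\colon A_0\to A_\lambda$ of a sequence $(A_\beta)_{\beta\le\lambda}$ that is continuous at limit stages ($A_\gamma=\colim{\beta<\gamma}A_\beta$), whose successor maps $f_\beta\colon A_\beta\to A_{\beta+1}$ are cobase changes of morphisms $\phi_\beta\colon P_\beta\to Q_\beta$ in $\twothree(I)$; write the defining pushout square with legs $p_\beta\colon P_\beta\to A_\beta$ and $q_\beta\colon Q_\beta\to A_{\beta+1}$, so $A_{\beta+1}=A_\beta\sqcup_{P_\beta}Q_\beta$. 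Since $\phi_\beta\in\twothree(I)$, fix $\psi_\beta\colon Q_\beta\to R_\beta$ with $\psi_\beta\in I$ and $\psi_\beta\phi_\beta\in I$. The goal is to produce a single object $C$ and a map $h\colon A_\lambda\to C$ with $h\in I\cofreg$ and $h\circ f\in I\cofreg$. I will use freely the routine facts that $I\cofreg$ contains all identities and is closed under cobase change and under transfinite composition, so that $(I\cofreg)\cofreg=I\cofreg$.

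The first tower produces $h\circ f$. By recursion on $\beta\le\lambda$ build a sequence $(B_\beta)$, continuous at limits, with $B_0:=A_0$ and $B_{\beta+1}:=B_\beta\sqcup_{P_\beta}R_\beta$ (the pushout of $\psi_\beta\phi_\beta$ along $u_\beta p_\beta\colon P_\beta\to B_\beta$), together with compatible maps $u_\beta\colon A_\beta\to B_\beta$ with $u_0=\mathrm{id}$, where $u_{\beta+1}\colon A_{\beta+1}=A_\beta\sqcup_{P_\beta}Q_\beta\to B_{\beta+1}$ is induced by $A_\beta\xrightarrow{u_\beta}B_\beta\to B_{\beta+1}$ and $Q_\beta\xrightarrow{\psi_\beta}R_\beta\to B_{\beta+1}$ — these agree on $P_\beta$ by the very construction of $B_{\beta+1}$ as that pushout. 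One checks the squares $A_\beta\to A_{\beta+1}$, $u_\beta$, $B_\beta\to B_{\beta+1}$, $u_{\beta+1}$ commute, so $u_\bullet$ is a natural transformation. Each transition $B_\beta\to B_{\beta+1}$ is a cobase change of $\psi_\beta\phi_\beta\in I$, so the transfinite composition $B_0\to B_\lambda:=\colim{\beta<\lambda}B_\beta$ lies in $I\cofreg$; by naturality and $u_0=\mathrm{id}$ this composition equals $u_\lambda\circ f$. Taking $C:=B_\lambda$ and $h:=u_\lambda$, we thus have $h\circ f\in I\cofreg$.

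The second tower shows $h=u_\lambda\in I\cofreg$. Put $C_\beta:=A_\lambda\sqcup_{A_\beta}B_\beta$, the cobase change of $u_\beta$ along the structure map $A_\beta\to A_\lambda$. Then $C_0\cong A_\lambda$, $C_\lambda\cong B_\lambda$, the sequence $(C_\beta)$ is continuous at limits (colimits commute with colimits, using continuity of the $A$- and $B$-sequences), and its transfinite composition $C_0\to C_\lambda$ is $u_\lambda$. The point is that each $C_\beta\to C_{\beta+1}$ is a cobase change of $\psi_\beta\in I$: substituting $A_{\beta+1}=A_\beta\sqcup_{P_\beta}Q_\beta$ and $B_{\beta+1}=B_\beta\sqcup_{P_\beta}R_\beta$ into $C_{\beta+1}=A_\lambda\sqcup_{A_{\beta+1}}B_{\beta+1}$ and pasting pushouts — using $\psi_\beta\phi_\beta=\psi_\beta\circ\phi_\beta$ and that the leg $A_\lambda\to C_\beta$ already receives $A_\beta$ through $B_\beta$ — identifies $C_{\beta+1}$, naturally in $\beta$, with $C_\beta\sqcup_{Q_\beta}R_\beta$, the pushout of $\psi_\beta$ along $q_\beta$ followed by $A_{\beta+1}\to A_\lambda\to C_\beta$. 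Hence $u_\lambda=(C_0\to C_\lambda)\in I\cofreg$, and $h$ witnesses $f\in\twothree(I\cofreg)$. The conceptual content is entirely in the choice of these two towers; the step I expect to be genuinely fiddly is precisely this pushout-pasting bookkeeping — verifying $C_{\beta+1}\cong C_\beta\sqcup_{Q_\beta}R_\beta$ naturally in $\beta$ and that the composite of the $C$-sequence is $u_\lambda$ on the nose rather than up to an isomorphism one must track — while the remaining points (unwinding $\twothree(I)\cofreg$, the closure properties of $I\cofreg$, commutativity of the $u_\bullet$-squares, continuity of both sequences at limit ordinals) are routine.
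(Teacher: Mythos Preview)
Your proof is correct and takes a genuinely different, more direct route than the paper's. The paper first reduces to showing that $\twothree(I\cofreg)$ is closed under cobase change and transfinite composition; for the latter it runs a doubly nested transfinite recursion, building a ``staircase'' diagram row by row: at level $\gamma$ one invokes the $\twothree$-witness for the diagonal map $g_{\gamma,\gamma}\in\twothree(I\cofreg)$ to descend one step, then propagates that step across the entire remaining row by pushouts, and iterates. You instead exploit the explicit cell structure of a map in $\twothree(I)\cofreg$: because the attaching maps $\phi_\beta$ lie in $\twothree(I)$ rather than merely in $\twothree(I\cofreg)$, the witnesses $\psi_\beta$ lie in $I$ itself, and this lets you assemble the required witness for $f$ from two single (non-nested) transfinite towers $B_\bullet$ and $C_\bullet$. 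Your argument is shorter and avoids the nested induction; the paper's version is more modular in that it actually proves the slightly stronger intermediate statement $\twothree(I\cofreg)\cofreg\subseteq\twothree(I\cofreg)$, though that strengthening is not used elsewhere. The one place you flag as ``fiddly'' --- the identification $C_{\beta+1}\cong C_\beta\sqcup_{Q_\beta}R_\beta$ --- is indeed just two applications of pushout pasting, using $B_{\beta+1}\cong(B_\beta\sqcup_{A_\beta}A_{\beta+1})\sqcup_{Q_\beta}R_\beta$ and $A_\lambda\sqcup_{A_{\beta+1}}(B_\beta\sqcup_{A_\beta}A_{\beta+1})\cong A_\lambda\sqcup_{A_\beta}B_\beta=C_\beta$.
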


We will first show how to prove Theorem \ref{ThmCheckOnJPrime} using Lemma \ref{lemAleph}, and then we will prove Lemma \ref{lemAleph}.

\begin{proof}[Proof of Theorem \ref{ThmCheckOnJPrime}]
	Let $TC$ denote the set of all trivial cofibrations in $\cc M$.
	We first claim that $$TC \otimes \cc M \subseteq \twothree((J^\prime \otimes \cc M)\cof).$$
	Take a trivial cofibration $f : A \rightarrow B$ and take $X \in \cc M$. We need to show $f \otimes X \in \twothree((J^\prime \otimes \cc M)\cof)$.
	Factor the map $B \rightarrow 1$ into a trivial cofibration $h : B \rightarrow B^f$ and a fibration $B^f \rightarrow 1$. So $B^f$ is fibrant. Then $h$ and $h \circ f$ are trivial cofibrations with fibrant codomain.
	By Lemma \ref{LemmaJcof} we have $h \in J^\prime\cof$ and $h \circ f \in J^\prime\cof$.
	Using the closed monoidal structure of $\cc M$ we can show that
	$$(J^\prime \cof) \otimes \cc M \subseteq (J^\prime \otimes \cc M)\cof.$$
	So it then follows that $h\otimes X$ and $(h \circ f) \otimes X$ lie in $(J^\prime \otimes \cc M)\cof$.
	Since $(h\otimes X )\circ (f \otimes X) = (h \circ f) \otimes X$ this then implies that $f \otimes X \in \twothree((J^\prime \otimes \cc M)\cof)$.
	This shows the claim that $$TC \otimes \cc M \subseteq \twothree((J^\prime \otimes \cc M)\cof).$$
	
	From Lemma \ref{lemAleph} it now follows that
		$$(TC \otimes \cc M)\cofreg \subseteq \twothree((J^\prime \otimes \cc M)\cof)\cofreg \subseteq$$  $$\subseteq \twothree((J^\prime \otimes \cc M)\cof\cofreg) \subseteq \twothree((J^\prime \otimes \cc M)\cof) $$
		
	Let us now prove the monoid axiom.
	Take $f \in (TC \otimes \cc M)\cofreg$. We need to show that $f$ is a weak equivalence. By the above subset inclusions we have $f \in \twothree((J^\prime \otimes \cc M)\cof)$. So there exists $h \in (J^\prime \otimes \cc M)\cof$, such that $h$ is composable with $f$ and $h \circ f \in (J^\prime \otimes \cc M)\cof$.
	By \cite[Corollary 2.1.15]{hovey2007model}, the maps $h$ and $h\circ f$ are then retracts from morphisms in $(J^\prime \otimes \cc M)\cofreg$.
	Since all morphisms from $(J^\prime \otimes \cc M)\cofreg$ are weak equivalences, it follows that $h$ and $h\circ f$ are weak equivalences.
	By $2$-of-$3$ it follows that $f$ is a weak equivalence.
	So $\cc M$ satisfies the monoid axiom.
\end{proof}

\begin{proof}[Proof of Lemma \ref{lemAleph}]
	Since $I \subseteq I\cofreg$ we have that $$\twothree(I) \subseteq \twothree(I\cofreg).$$
	To show the lemma we now just need to show that $\twothree(I\cofreg)$ is closed under base change and transfinite composition.
	
	\underline{Base change:}\\
	Consider a pushout diagram
	$$\xymatrix{A \ar[d] \ar[r]^f & B \ar[d] \\
	            X \ar[r]^g & Y}$$
    with $f \in \twothree(I\cofreg)$. Then there exists a morphism $\twothreemap{f} : B \rightarrow \twothreecone{f}$ such that $\twothreemap{f} \in I\cofreg$ and $\twothreemap{f} \circ f \in I\cofreg$.
    Consider the diagram
    $$\xymatrix{A \ar[d] \ar[r]^f & B \ar[d] \ar[r]^{\twothreemap{f}} & \twothreecone{f} \ar[d] \\
    	X \ar[r]^g & Y \ar[r]^(0.4)k & \twothreecone{f} \underset{B}{\coprod} Y }$$
    where the square on the right is a pushout. Then $k$ is a base change of $\twothreemap{f}$, so $k \in I\cofreg$. Also the whole diagram is a pushout, so $k \circ g$ is a base change of $\twothreemap{f} \circ f$, so $k \circ g \in I\cofreg$. So $g \in \twothree(I\cofreg)$.
    Therefore $\twothree(I\cofreg)$ is closed under base change.
    
    \underline{Transfinite composition}:\\
    Let $\alpha$ be an ordinal and let
    $$\xymatrix{X_0 \ar[r]^{f_0} & X_1 \ar[r]^{f_1} & X_2 \ar[r] & \dots \ar[r] & X_\alpha }$$
    be a sequence of morphisms in $\twothree(I\cofreg)$. So for every $\beta \leq \alpha$ we have an object $X_\beta$, such that if $\beta \leq \alpha$ is a limit ordinal then $X_\beta = \colim{\gamma < \beta} X_\gamma$, and for every $\beta < \alpha$ we have a morphism $f_\beta : X_\beta \rightarrow X_{\beta +1}$ such that $f_\beta \in \twothree(I\cofreg)$.
    
    We now define via transfinite recursion a diagram in which each square is a pushout square and where $g_{0,\gamma} = f_{\gamma}$ for every ordinal $\gamma$, 
    $$\xymatrix{ Y_{0,0} \ar[r]^{g_{0,0}} & Y_{0,1} \ar[d]^{k_{0,1}} \ar[r]^{g_{0,1}} & Y_{0,2} \ar[d]^{k_{0,2}} \ar[r]^{g_{0,2}} & \dots \ar[d] \ar[r]  & Y_{0,\alpha} \ar[d]^{k_{0,\alpha}} \\
     & Y_{1,1}  \ar[r]^{g_{1,1}} & Y_{1,2} \ar[d]^{k_{1,2}} \ar[r]^{g_{1,2}} & \dots \ar[d] \ar[r]  & Y_{1,\alpha} \ar[d]^{k_{1,\alpha}} \\
     & & Y_{2,2} \ar[r]^{g_{2,2}} & \dots \ar[d] \ar[r]  & Y_{2,\alpha} \ar[d]^{k_{2,\alpha}} \\ 
     & & & \dots \ar[r] & \dots \ar[d] \\
     & & & & Y_{\alpha,\alpha} }$$
 and where each $k_{\beta,\gamma}$ is in $I\cofreg$ and for every $\gamma$ we have $k_{\gamma, \gamma+1} \circ g_{\gamma,\gamma} \in I\cofreg$. In a moment we will spell out in detail how to construct this diagram using two nested transfinite recursions. Before we spell it out in detail we quickly outline informally how the diagram is constructed: $g_{0,0}$ is in $\twothree(I\cofreg)$, so there exists $k_{0,1}$ like in the above diagram, with $k_{0,1}$ and $k_{0,1} \circ g_{0,0}$ in $I\cofreg$. Using $k_{0,1}$ one can take all pushout squares in the top row of the above diagram and get all the $k_{0,\gamma}$ and $g_{1,\gamma}$. Then $g_{1,1}$ is a pushout of $g_{0,1}$. Since $\twothree(I\cofreg)$ is stable under base change, it follows that $g_{1,1}$ is in $\twothree(I\cofreg)$, and the process can be repeated. That is the informal description. Now comes the formal description.

    Via transfinite recursion on an ordinal $\gamma$, we define
    \begin{enumerate}
    	\item For all ordinals $\gamma \leq \beta \leq \alpha$ an object $Y_{\gamma,\beta}$.
    	\item For all ordinals $\gamma \leq \beta < \alpha$ a morphism $g_{\gamma,\beta} : Y_{\gamma,\beta} \rightarrow Y_{\gamma,\beta+1}$.
    	\item If $\gamma$ is a successor ordinal, and $\gamma \leq \beta\leq \alpha$ a morphism $k_{\gamma-1,\beta} :Y_{\gamma-1,\beta} \rightarrow Y_{\gamma,\beta}$.
    \end{enumerate} We define them such that they satisfy the following properties
    \begin{enumerate}
    	\item For all $\gamma \leq \beta < \alpha$ we have $g_{\gamma,\beta} \in \twothree(I\cofreg)$.
    	\item If $\gamma$ is a successor ordinal and $\gamma \leq \beta \leq \alpha$ then $k_{\gamma-1,\beta}\in I\cofreg$ and $k_{\gamma-1,\gamma}\circ g_{\gamma-1,\gamma-1} \in I\cofreg$.
    \end{enumerate}
    
    \underline{$\gamma$ Induction Start}: $\gamma= 0$\\
    We define $Y_{0,\beta} := X_\beta$ and $g_{0,\beta} := f_\beta$.
    Then $g_{0,\beta} \in \twothree(I\cofreg)$.
    This completes the $\gamma$ induction start.
    
    \underline{$\gamma$ Induction Step}: $\gamma = \delta+1$ for some $\delta$ satisfying the induction hypothesis.\\

    For all $\beta > \delta$ we now define by transfinite recursion over $\beta$:
    \begin{enumerate}
    	\item An object $Y_{\gamma,\beta}$
    	\item A morphism $k_{\delta,\beta}: Y_{\delta,\beta} \rightarrow Y_{\gamma,\beta}$
    	\item If $\beta$ is a successor ordinal larger than $\gamma$, a morphism $g_{\gamma,\beta-1} : Y_{\gamma,\beta-1} \rightarrow Y_{\gamma,\beta}$
    \end{enumerate}
	such that
	
	\begin{enumerate}
		\item If $\beta$ is a limit ordinal, then $Y_{\gamma,\beta}$ is the filtered colimit of the sequence 
		$$\xymatrix{ Y_{\gamma,\gamma} \ar[r]^{g_{\gamma,\gamma}} \ar[r] & Y_{\gamma,\gamma+1} \ar[r]^{g_{\gamma,\gamma+1}}  & \dots \ar[r] & Y_{\gamma,\epsilon} \ar[r]^{g_{\gamma,\epsilon}} & \dots    } .$$
		\item For all $\beta > \delta$ we have $k_{\delta,\beta} \in I\cofreg$.
		\item If $\beta$ is a successor ordinal larger than $\gamma$ then $g_{\gamma,\beta-1} \in \twothree(I\cofreg)$.
		\item $k_{\delta,\gamma} \circ g_{\delta,\delta} \in I\cofreg$.
	\end{enumerate}
	The last property does not depend on $\beta$, and will be shown in the induction start of this recursion over $\beta$.
	
	This entire transfinite induction on $\beta$ will happen within the induction step over $\gamma$. And once the this transfinite induction over $\beta$ is complete, the induction step for $\gamma$ is complete.
	
    The smallest number greater than $\delta$ is $\gamma$, so the induction over $\beta$ will start at $\gamma$.
    
    \underline{$\beta$ Induction Start}: $\beta = \gamma$\\
    
    By inductive assumption on on the $\gamma$ variable we know that $g_{\delta,\delta}$ is already defined, and that $g_{\delta,\delta} \in \twothree(I\cofreg)$.
    So by definition of $\twothree(I\cofreg)$, there exists an object $Y_{\gamma,\gamma}$ and a morphism $$k_{\delta,\gamma} : Y_{\delta,\gamma} \rightarrow Y_{\gamma,\gamma}$$ such that $k_{\delta,\gamma} \in I\cofreg$ and $k_{\delta,\gamma} \circ g_{\delta,\delta} \in I\cofreg$.
    
    This completes the $\beta$ induction start.
    
    \underline{$\beta$ Induction Step}: $\beta = \epsilon + 1$ for some $\epsilon$ satisfying the induction hypothesis.\\
    
    By inductive assumption we have already defined everything in the following diagram.
    $$\xymatrix{ Y_{\delta,\epsilon} \ar[d]_{g_{\delta,\epsilon}} \ar[r]^{k_{\delta,\epsilon}} & Y_{\gamma,\epsilon} \\
    	Y_{\delta,\beta} & }$$
    We define $Y_{\gamma,\beta}$ and $k_{\delta,\beta} : Y_{\delta,\beta} \rightarrow Y_{\gamma,\beta}$ and $g_{\gamma,\epsilon} : Y_{\gamma,\epsilon} \rightarrow Y_{\gamma,\beta}$ by taking the pushout of the above diagram.
    
    $$\xymatrix{ Y_{\delta,\epsilon} \ar[d]_{g_{\delta,\epsilon}} \ar[r]^{k_{\delta,\epsilon}} & Y_{\gamma,\epsilon}  \ar[d]^{g_{\gamma,\epsilon}} \\
    	Y_{\delta,\beta} \ar[r]_{k_{\delta,\beta}}  & Y_{\gamma,\beta} }$$
    
	Then $k_{\delta,\beta} \in I\cofreg$ because it is a base change of $k_{\delta,\delta}$.
	And $g_{\gamma,\beta} \in \twothree(I\cofreg)$, because it is a base change of $g_{\delta,\beta}$, and $\twothree(I\cofreg)$ is stable under base change.
	With this we have completed the $\beta$ induction step.
	
	\underline{$\beta$ Limit Case}: Let $\beta$ be a limit ordinal, and assume all ordinals below $\beta$ satisfy the induction hypothesis.\\
	
	Consider the following diagram of already defined objects.
	
	$$\xymatrix{ Y_{\delta,\gamma} \ar[d]_{k_{\delta,\gamma}} \ar[r]^{g_{\delta,\gamma}} \ar[r] & Y_{\delta,\gamma+1} \ar[d]_{k_{\delta,\gamma+1}} \ar[r]^{g_{\delta,\gamma+1}}  & \dots \ar[r] & Y_{\delta,\epsilon} \ar[d]_{k_{\delta,\epsilon}} \ar[r]^{g_{\delta,\epsilon}} & \dots  \\  
	Y_{\gamma,\gamma} \ar[r]^{g_{\gamma,\gamma}} \ar[r] & Y_{\gamma,\gamma+1} \ar[r]^{g_{\gamma,\gamma+1}}  & \dots \ar[r] & Y_{\gamma,\epsilon} \ar[r]^{g_{\gamma,\epsilon}} & \dots } $$
	where $\epsilon$ ranges over all ordinals less than $\beta$.
	
	By inductive assumption on $\beta$, the filtered colimit of the upper row is $Y_{\delta,\beta}$.
	We define $Y_{\gamma,\beta}$ as the filtered colimit of the lower row.
	By the universal property of colimits we get a canonical map $k_{\delta,\beta} : Y_{\delta,\beta} \rightarrow Y_{\gamma,\beta}$.
	Since all squares in the above diagram are pushout squares, the map $k_{\delta,\beta} : Y_{\delta,\beta} \rightarrow Y_{\gamma,\beta}$ is a base change of $k_{\delta,\epsilon}$ for any $\epsilon < \beta$. This implies that $k_{\delta,\beta}\in I\cofreg$.
	
	This completes the transfinite induction over $\beta$.\\
	And this then completes the induction step for $\gamma$.
	
    \underline{$\gamma$ Limit Case}: Let $\gamma$ be a limit ordinal, and assume all ordinals below $\gamma$ satisfy the induction hypothesis.\\
    For all $\beta \geq \gamma$ we define $Y_{\gamma,\beta}$ as the filtered colimit of the sequence
    $$\xymatrix{ Y_{0,\beta} \ar[r]^{k_{0,\beta}} \ar[r] & Y_{1,\beta} \ar[r]^{k_{1,\beta}}  & \dots \ar[r] & Y_{\delta,\beta} \ar[r]^{k_{\delta,\beta}} & \dots    } $$
    where $\delta$ ranges over all ordinals less than $\gamma$.
    We define 
    
    $$g_{\gamma,\beta} : Y_{\gamma,\beta} \rightarrow Y_{\gamma,\beta+1} $$
    via the pushout
    $$\xymatrix{Y_{0,\beta} \ar[d]   \ar[r]^{g_{0,\beta}} & Y_{0,\beta+1} \ar[d] \\
    	Y_{\gamma,\beta} \ar[r]^{g_{\gamma,\beta}} & Y_{\gamma,\beta+1}   } $$
    Then $g_{\gamma,\beta} \in \twothree(I\cofreg)$, because it is a base change of $g_{0,\beta}$, and $\twothree(I\cofreg)$ is stable under base change.
	
	With this the transfinite recursion on $\gamma$ is complete.
	We now have a diagram
	
	$$\xymatrix{ Y_{0,0} \ar[r]^{g_{0,0}} & Y_{0,1} \ar[d]^{k_{0,1}} \ar[r]^{g_{0,1}} & Y_{0,2} \ar[d]^{k_{0,2}} \ar[r]^{g_{0,2}} & \dots \ar[d] \ar[r]  & Y_{0,\alpha} \ar[d]^{k_{0,\alpha}} \\
		& Y_{1,1}  \ar[r]^{g_{1,1}} & Y_{1,2} \ar[d]^{k_{1,2}} \ar[r]^{g_{1,2}} & \dots \ar[d] \ar[r]  & Y_{1,\alpha} \ar[d]^{k_{1,\alpha}} \\
		& & Y_{2,2} \ar[r]^{g_{2,2}} & \dots \ar[d] \ar[r]  & Y_{2,\alpha} \ar[d]^{k_{2,\alpha}} \\ 
		& & & \dots \ar[r] & \dots \ar[d] \\
		& & & & Y_{\alpha,\alpha} }$$
	in which the top horizontal row is our initial sequence
	$$\xymatrix{X_0 \ar[r]^{f_0} & X_1 \ar[r]^{f_1} & X_2 \ar[r] & \dots \ar[r] & X_\alpha }.$$
	Furthermore we have for all $\beta < \alpha$ that $k_{\beta,\alpha} \in I\cofreg$ and $k_{\beta,\beta+1} \circ g_{\beta,\beta} \in I\cofreg.$
	
	Let $$f : X_0 \rightarrow X_\alpha$$ be the transfinite composition of all the $f_\beta : X_\beta \rightarrow X_{\beta+1}$. Let $$k : X_\alpha = Y_{0,\alpha} \rightarrow Y_{\alpha,\alpha}$$ be the transfinite composition of all the $k_{\beta,\alpha} : Y_{\beta,\alpha} \rightarrow Y_{\beta+1,\alpha}$. Let $$\Delta: X_0 = Y_{0,0} \rightarrow Y_{\alpha,\alpha}$$
	be the transfinite composition of all the $k_{\beta,\beta+1} \circ g_{\beta,\beta} : Y_{\beta,\beta} \rightarrow Y_{\beta+1,\beta+1}$.
	Then the above diagram implies that
	$$k \circ f = \Delta.$$
	Now $k$ and $\Delta$ are transfinite compositions of morphisms from $I\cofreg$. Therefore $k \in I\cofreg$ and $\Delta \in I\cofreg$. This then implies that $f \in \twothree(I\cofreg)$.
	
	Therefore $\twothree(I\cofreg)$ is closed under transfinite compositions.
	This completes the proof of Lemma \ref{lemAleph}.
\end{proof}

\section{Monoid Axiom in Bousfield localizations}\label{sectionMonoidInBousfield}

\begin{thm}\label{ThmGeneral}
	Let $\cc M$ be a monoidal model $\cc V$-category with a set of weakly generating trivial cofibrations $J^\prime$.
	Let $S$ be a class of cofibrations with cofibrant domain in $\cc M$, such that the $\cc V$-Bousfield localization $L^{\cc V}_S\cc M$ exists.
	Assume that $J^\prime \cup (S \square I_{\cc V}) $ permits the small object argument.
	If all morphisms in
	$$((J^\prime \cup (S \square I_{\cc V}) )\otimes \cc M)\cofreg $$ are weak equivalences in $\LSM$, then $\LSM$ is a monoidal model $\cc V$-category that satisfies the monoid axiom.
\end{thm}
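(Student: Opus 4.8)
The plan is to obtain the theorem as the confluence of Proposition \ref{lemmaBousfield} and Theorem \ref{ThmCheckOnJPrime}, followed by a separate verification of the pushout-product axiom for $\LSM$. Since the pushout-product construction is symmetric, the set $J^\prime \cup (S \square I_{\cc V})$ occurring here is literally the same set as $J^\prime \cup (I_{\cc V} \square S)$ in Proposition \ref{lemmaBousfield}; write $K$ for it. By Proposition \ref{lemmaBousfield}, $K$ is a set of weakly generating trivial cofibrations for $\LSM$. Recall also that $\LSM$ has the same underlying category, the same cofibrations and the same monoidal product $\otimes$ as $\cc M$, and — since the enriched localization is assumed to exist — it is a $\cc V$-model category.

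For the monoid axiom I would simply invoke Theorem \ref{ThmCheckOnJPrime} with the model-and-monoidal category $\LSM$ and the set $K$: indeed $K$ is a set of weakly generating trivial cofibrations for $\LSM$ by the above, it permits the small object argument by hypothesis, and $(K \otimes \cc M)\cofreg$ consists of weak equivalences in $\LSM$, again by hypothesis. Theorem \ref{ThmCheckOnJPrime} then yields that $\LSM$ satisfies the monoid axiom.

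It remains to see that $\LSM$ is a monoidal model $\cc V$-category. It is a $\cc V$-model category, and since its cofibrations are those of $\cc M$ the pushout-product of two cofibrations is a cofibration; the unit axiom is inherited from $\cc M$ (the unit is unchanged and $\LSM$ has only more weak equivalences). The remaining point is that for a cofibration $i$ and a trivial cofibration $g$ of $\LSM$ the map $i \square g$ — automatically a cofibration — is a weak equivalence in $\LSM$. I would prove this by a double reduction. Since trivial cofibrations of $\LSM$ are stable under cobase change, transfinite composition and retracts, and $i \square (-)$ commutes with these constructions, one first reduces $i$ to a generating cofibration of $\cc M$ (with cofibrant domain, hence also cofibrant codomain). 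Next, factoring the codomain of $g$ into a trivial cofibration followed by a fibration of $\LSM$ and applying $2$-of-$3$ together with the monoid axiom just established (so that tensoring a trivial cofibration of $\LSM$ with a cofibrant object is a weak equivalence), one reduces $g$ to a map with fibrant codomain; by Lemma \ref{LemmaJcof} such a $g$ lies in $K\cof$, and one more application of the same closure properties reduces $g$ to an element of $K = J^\prime \cup (S \square I_{\cc V})$. For $g \in J^\prime$ the map $i \square g$ is already a trivial cofibration in $\cc M$, since $\cc M$ is a monoidal model category. For $g = s \square f$ with $s \in S$ and $f \in I_{\cc V}$, I would reassociate the iterated pushout-product as $i \square (s \square f) \cong (i \square s) \square f$ using the coherence between $\otimes$ and the $\cc V$-tensoring; since $\LSM$ is a $\cc V$-model category it then suffices to show that $i \square s$ is a weak equivalence in $\LSM$. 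For the latter, the hypothesis — after rewriting the set $(S \square I_{\cc V}) \otimes \cc M$ as $(S \otimes \cc M) \square I_{\cc V}$ — shows, via the closure properties applied to the variable in $I_{\cc V}$, the unit axiom for the $\cc V$-action, and $2$-of-$3$, that $s \otimes X$ is a weak equivalence in $\LSM$ for every cofibrant $X$; a further $2$-of-$3$ argument, comparing $i \square s$ with the maps obtained by tensoring $s$ with the domain and codomain of $i$, then gives that $i \square s$ is a weak equivalence in $\LSM$.

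The monoid-axiom half of the statement is thus essentially immediate once Proposition \ref{lemmaBousfield} and Theorem \ref{ThmCheckOnJPrime} are available. The genuine work — and the step I expect to be the main obstacle — is the verification that $\LSM$ is a monoidal model $\cc V$-category, and within that the handling of the $S \square I_{\cc V}$ part of $K$: here one must disentangle the interaction between the monoidal product of $\cc M$ and its $\cc V$-tensoring and, above all, avoid a circularity, since one cannot simply assert that tensoring with maps from $S$ preserves $S$-local weak equivalences — that is precisely what is at stake — so the argument has to be funneled through the explicit weakly generating set $K$, the unit axioms, and repeated use of $2$-of-$3$.
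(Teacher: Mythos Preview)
Your derivation of the monoid axiom is exactly the paper's: apply Proposition \ref{lemmaBousfield} to see that $K = J^\prime \cup (I_{\cc V}\square S)$ weakly generates the trivial cofibrations of $\LSM$, then invoke Theorem \ref{ThmCheckOnJPrime}. No issue there.

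Where you diverge is in the pushout-product axiom, and here you work much harder than necessary and introduce assumptions that are not available. Your reduction of the cofibration $i$ to a \emph{generating} cofibration of $\cc M$ ``with cofibrant domain'' is not justified: the theorem does not assume $\cc M$ is cofibrantly generated (only that $\LSM$ exists and that $J'$ weakly generates), and even when $\cc M$ is cofibrantly generated there is no hypothesis that the generating cofibrations have cofibrant domains. Your later argument that $s\otimes X$ is a weak equivalence for cofibrant $X$ depends on this reduction and also on extracting the map $s\otimes X$ from the maps $(s\otimes X)\square f$ with $f\in I_{\cc V}$, which requires $\emptyset\to\mathbb 1_{\cc V}$ to lie in $I_{\cc V}\cofreg$; that needs a cofibrant unit in $\cc V$, again not assumed. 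So the chain of reductions, while plausible in many concrete situations, does not go through under the stated hypotheses.

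The paper avoids all of this by observing that the monoid axiom, \emph{once established}, already gives the pushout-product axiom by a single $2$-of-$3$. For a cofibration $f:A\to B$ and a trivial cofibration $g$ in $\LSM$, both $A\otimes g$ and $B\otimes g$ lie in $TC\otimes\cc M$, and the map $h$ from $B\otimes C$ to the pushout is a cobase change of $A\otimes g$, hence in $(TC\otimes\cc M)\cofreg$; by the monoid axiom $h$ and $B\otimes g$ are weak equivalences, and since $B\otimes g = (f\square g)\circ h$, so is $f\square g$. No reduction to generators, no reassociation of iterated pushout-products, no cofibrancy hypotheses. Your instinct that ``tensoring with maps from $S$'' is the delicate point is precisely what the paper sidesteps: it never isolates $s\otimes X$, it feeds the whole trivial cofibration $g$ through the monoid axiom.
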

\begin{proof}
	By Lemma \ref{lemmaBousfield} we know that $J^\prime \cup (S \square I_{\cc V})$ is a set of weakly generating trivial cofibrations for $L^{\cc V}_S\cc M$. Theorem \ref{ThmCheckOnJPrime} now implies that $L^{\cc V}_S\cc M$ satisfies the monoid axiom.
	We now just need to show that $L^{\cc V}_S\cc M$ is a monoidal model category.
	Since $\cc M$ is a monoidal model category, and the cofibrations in $\cc M$ and $L^{\cc V}_S\cc M$ coincide, we know that $L^{\cc V}_S\cc M$ satisfies the unit axiom of monoidal model categories and we know that the pushout-product of two cofibrations is a cofibration.
	
	Let $f : A \rightarrow B$ be a cofibration and $g : C \rightarrow D$ be a trivial cofibration in $L^{\cc V}_S\cc M$. We need to show that $f \square g$ is a weak equivalence in $L^{\cc V}_S\cc M$.
	
	Consider the diagram
	$$\xymatrix{ A \otimes C \ar[d] \ar[r]^{A \otimes g} & A \otimes D \ar[d] \ar@/^2.0pc/[ddr] \\
	B \otimes C \ar[r]^(0.4)h \ar@/_1.5pc/[rrd]_{B \otimes g} & A \otimes D \underset{A \otimes C}{\coprod} B \otimes C \ar[dr]^(0.6){f \square g} \\
    & & B \otimes D } $$
	
	The morphism $h$ is a base change of $A \otimes g$. Since $g$ is a trivial cofibration and $L^{\cc V}_S\cc M$ satisfies the monoid axiom, this means that $h$ is a weak equivalence in $L^{\cc V}_S\cc M$.
	Similarly $B \otimes g$ is a weak equivalence in $L^{\cc V}_S\cc M$. So by 2-of-3 it follows that $f \square g$ is a weak equivalence in  $L^{\cc V}_S\cc M$. So $L^{\cc V}_S\cc M$ is a monoidal model category. This concludes the proof of the theorem.
\end{proof}


\begin{thebibliography}{99}
	
	\bibitem{barwick2010bousfield}
	C. Barwick,
	\newblock{On left and right model categories and left and right Bousfield localizations}
	\newblock{Homology, Homotopy and Applications} 12 (2010), 245--320.
	
	
	\bibitem{bonart2023triangulated}
	P. Bonart,
	\newblock Triangulated categories of big motives via enriched functors, preprint 
	arXiv:2310.17349.
	
	
	\bibitem{dundas2003enriched}
	B. Dundas, O. R{\"o}ndigs, P. {\O}stv{\ae}r,
	\newblock {Enriched functors and stable homotopy theory,}
	\newblock { Doc. Math.} 8 (2003), 409--488.
	
	
	
	\bibitem{hirschhorn2003model}
	P. S. Hirschhorn,
	\newblock { Model categories and their localizations},
	\newblock  American Mathematical Society, Providence, RI,  2003.
	
	\bibitem{hovey2007model}
	M. Hovey,
	\newblock { Model categories},
	\newblock American Mathematical Society, Providence, RI, 1999.
	
	
	\bibitem{schwede2000algebras}
	S. Schwede, B. E. Shipley,
	\newblock Algebras and modules in monoidal model categories,
	\newblock {Proc. London Math. Soc.} 80(2) (2000), 491--511.
	
	
\end{thebibliography}
\end{document}